\numberwithin{equation}{section}
\newtheorem{Thrm}{Theorem}[section]
\newtheorem{Cor}[Thrm]{Corollary}
\newtheorem{Fact}[Thrm]{Fact}
\newtheorem{Lem}[Thrm]{Lemma}
\newtheorem*{Thrm*}{Theorem}
\theoremstyle{definition}
\newtheorem{Def}[Thrm]{Definition}
\theoremstyle{remark}
\newtheorem{Rem}[Thrm]{Remark}
\newcommand{\C}{\mathbb{C}}
\newcommand{\Gm}{\mathbb{G}_{m}}
\newcommand{\R}{\mathbb{R}}
\newcommand{\Z}{\mathbb{Z}}
\newcommand{\Ac}{\mathcal{A}}
\newcommand{\Fc}{\mathcal{F}}
\newcommand{\Gc}{\mathcal{G}}
\newcommand{\Hc}{\mathcal{H}}
\newcommand{\Kc}{\mathcal{K}}
\newcommand{\Lc}{\mathcal{L}}
\newcommand{\Mcal}{\mathcal{M}}
\newcommand{\Nc}{\mathcal{N}}
\newcommand{\Oc}{\mathcal{O}}
\newcommand{\Sc}{\mathcal{S}}
\newcommand{\Tc}{\mathcal{T}}
\newcommand{\Qc}{\mathcal{Q}}
\newcommand{\Xc}{\mathcal{X}}
\newcommand{\Yc}{\mathcal{Y}}
\newcommand{\ch}{\mathrm{ch}\,}
\newcommand{\chn}{\mathrm{ch}}
\newcommand{\cn}{\mathrm{c}}
\newcommand{\Coh}{\mathrm{Coh}\,}
\newcommand{\Lab}{\mathrm{L}^0}
\newcommand{\NumR}{\mathrm{Num}_{\R}\,}
\newcommand{\QCoh}{\mathrm{QCoh}\,}
\newcommand{\Rab}{\mathrm{R}^0}
\newcommand{\rk}{\mathrm{rk}\,}
\newcommand{\SD}{\mbox{\textbf{D}}}
\newcommand{\Spec}{\mathrm{Spec}\,}
\newcommand{\relSpec}{\mathcal{S}\mathrm{pec}}
\newcommand{\Chow}{\mathrm{A}^\bullet}
\newcommand{\K}{\mathrm{K}_0\,}
\newcommand{\Kany}{\mathrm{K}_0^*\,}
\newcommand{\Kor}{\mathrm{K}_0^{\mathrm{or}}\,}
\newcommand{\Kt}{\mathrm{K}_0^{\mathrm{t}}\,}
\newcommand{\Db}{\mathrm{D^b}}
\newcommand{\Hom}[2]{\mathrm{Hom}_{#1} \left( #2 \right)}
\newcommand{\Hrm}{\mathrm{H}}
\newcommand{\RGamma}{\mathrm{R}\Gamma}
\newcommand{\deq}{\overset{\text{def}}{=}}
\newcommand{\fprod}[1]{\underset{#1}{\times}}
\title{Moduli spaces of stable sheaves over quasi-polarized surfaces, and the relative Strange Duality morphism}
\author{Svetlana Makarova}
\address{University of Pennsylvania, David Rittenhouse Laboratory, 209 South 33rd Street,
Philadelphia, PA 19104-6395}
\email{murmuno@yandex.ru}
\begin{document}



\maketitle

\begin{prelims}

\DisplayAbstractInEnglish

\bigskip

\DisplayKeyWords

\medskip

\DisplayMSCclass

\bigskip

\languagesection{Fran\c{c}ais}

\bigskip

\DisplayTitleInFrench

\medskip

\DisplayAbstractInFrench

\end{prelims}


\newpage

\setcounter{tocdepth}{1}

\tableofcontents


\section{Introduction}

The work on the present paper started with an attempt to strengthen the results on Strange Duality on K3 surfaces, and is largely motivated by the approach of Marian and Oprea \cite{MO}.
Strange Duality is a conjectural duality between global sections of two natural line bundles on moduli spaces of stable sheaves.
It originated as a representation theoretic observation about pairs of affine Lie algebras, and then was reformulated geometrically over the moduli of bundles over curves \cite{DT}, \cite{Beau}.
In our paper, we develop the geometric approach 
to Strange Duality over surfaces
in the spirit of Marian and Oprea. They proved the Strange Duality conjecture for Hilbert sche\-mes of points on surfaces, 
moduli of sheaves on elliptic K3 surfaces with a section \cite{MO}; and cases for abelian surfaces \cite{MO14:abelian}, including a joint work \cite{BMOY:abelian}. The latter used birational isomorphisms of moduli spaces of stable sheaves with Hilbert schemes of points on the same K3 surface, following Bridgeland \cite{Br}, to reduce the question to the known case of Hilbert schemes. Further, Marian and Oprea use this result to conclude the Strange Duality isomorphism for a generic K3 surface in the moduli space of polarized K3 surfaces of degree at least four \cite{MO} (the idea first appeared in their earlier paper \cite{MO10}), for a pair of vectors whose determinants are equal to the polarization.

In order to make this argument work for K3 surfaces of degree two, we have to construct moduli spaces of stable sheaves over the stack of \emph{quasi}\-po\-la\-rized K3 surfaces, without assuming that the quasi-polarization (a choice of a big and nef line bundle) is ample. This is needed because elliptic K3 surfaces of degree two are not polarized, so the original approach of Marian and Oprea needs modification.
The question of whether the Strange Duality construction can be extended from the polarized locus to the whole moduli stack of quasi-polarized K3 surfaces was left open in \cite{MO}.
Stepping away from the ample locus requires 
that we retrace classical results in moduli theory: we prove openness of the stable locus, show that relative moduli spaces exist, and use the theory of good moduli spaces to derive gluing and descent results.
This notion was introduced by Jarod Alper \cite{Alp},
and further developed by Alper, Hall,  Halpern-Leistner, Heinloth and Rydh in numerous works; the most important for the present paper will be a recent remarkable result giving
a criterion for when a stack has a good moduli space \cite{AHHL}. 
This part of our work culminates in the following result, which we consider the main contribution of the paper:
\begin{Thrm}[Theorems~\ref{Thrm:good_mm} and~\ref{Thrm:construct_Mc}]
    Let $\Kc$ be the moduli stack of quasi-polarized projective surfaces, and let $\Xc$ be the universal surface with the universal quasi-polarization $\Hc$.
    Fix a Chern character $v$ over $\Xc$.
    Assume that,
    pointwise over $\Kc$,
    slope stability is equivalent to slope semistability for sheaves in class $v$. Then the stack of stable sheaves $\Qc \to \Kc$ of K-theory class $v$ is algebraic.
    Further, there exists a relative good moduli space $\Qc \to \Mcal$.
    The stack $\Mcal \to \Kc$ is fiberwise
    $($i.e. over each closed point of $\Kc)$
    the moduli scheme of stable sheaves of class $v$ with respect to the restriction of the universal quasi-polarization.
\end{Thrm}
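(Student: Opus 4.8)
The plan is to prove this theorem by combining a relative representability argument for the stack $\Qc$ with an application of the existence criterion for good moduli spaces from \cite{AHHL}. The key preliminary step, which I would have established in earlier sections, is \emph{openness of the stable locus}: over the universal surface $\Xc \to \Kc$, the condition of being slope-stable in class $v$ is an open condition in flat families. This is where the hypothesis that slope stability equals slope semistability pointwise is essential, since it is the coincidence of the two notions that makes the stable locus \emph{closed} under specialization within the semistable locus, and hence gives us the separatedness and properness-type behavior we need later. Granting openness, the stack $\Qc$ is cut out as an open substack of the stack of all coherent sheaves (or perfect complexes) on $\Xc$ in the fixed K-theory class $v$, flat over $\Kc$; the algebraicity of the latter is standard (it is a relative version of the stack of coherent sheaves on a projective morphism, which is algebraic and locally of finite type), so $\Qc \to \Kc$ inherits algebraicity as an open substack.

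The main work is the second assertion: the existence of a relative good moduli space $\Qc \to \Mcal$. Here I would invoke the criterion of \cite{AHHL}, which reduces the existence of a good moduli space to two verifiable conditions on the algebraic stack $\Qc$: \emph{$\Theta$-reductivity} and \emph{$S$-completeness} (together with the stack being of finite type with affine diagonal). First I would check that $\Qc$ has affine (indeed, in the stable case, finite) diagonal, using that stable sheaves are simple, so their automorphism groups are $\Gm$ after rigidification, or more precisely that $\Hom$ between non-isomorphic stable sheaves of the same reduced Hilbert polynomial vanishes. Then the two valuative-type criteria amount to statements about families of sheaves over a punctured disk (or the relevant test stacks $\Theta$ and $\overline{ST}$) and whether limits exist uniquely; these follow from the boundedness and the semistable-equals-stable hypothesis, which forces any limiting semistable sheaf to already be stable, eliminating the usual S-equivalence ambiguity. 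This is what guarantees that the good moduli space $\Mcal$ is in fact a fine-type moduli space of stable sheaves rather than a GIT quotient identifying strictly semistable objects.

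The hardest part, I expect, will be arranging everything \emph{relatively} over $\Kc$ and checking that the good moduli space commutes with base change, so as to identify the fibers of $\Mcal \to \Kc$ with the classical moduli schemes. The fiberwise identification requires knowing that the formation of the good moduli space is compatible with restriction to a closed point $k \in \Kc$: concretely, that $(\Qc)_k \to \Mcal_k$ is again a good moduli space and that $\Mcal_k$ coincides with the Gieseker--Maruyama moduli scheme of stable sheaves on the surface $\Xc_k$ with respect to the restricted quasi-polarization $\Hc_k$. Base-change compatibility of good moduli spaces is subtle in general and is not automatic; I would handle it by verifying that $\Qc \to \Mcal$ is \emph{cohomologically affine} and that the base-change morphism is an isomorphism after passing to the fiber, using flatness of the relevant sheaves and the fact that the pointwise moduli problem has no strictly semistable objects (so the good moduli space is geometric and agrees with the coarse moduli space constructed classically). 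The subtlety of working with a merely big-and-nef quasi-polarization, rather than an ample one, means I cannot directly borrow the GIT construction on each fiber, so the coincidence of the good moduli space with the classical scheme must be checked via the universal property and the adequacy of the moduli functor rather than by citing Gieseker's construction verbatim.
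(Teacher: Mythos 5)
Your outline matches the paper's architecture (openness of the stable locus gives algebraicity; the existence criterion of Alper--Halpern-Leistner--Heinloth gives a good moduli space over schematic charts; base change identifies the fibers), but there are two genuine gaps, both located exactly where the paper does its real work. First, you grant openness of the stable locus as a preliminary, yet this is the main technical point once $\Hc$ is only big and nef: the classical openness of the semistable locus (Huybrechts--Lehn) is proved for ample polarizations, and there is no off-the-shelf statement for a quasi-polarization. The paper proves openness by splitting it into constructibility plus stability under generization (then invoking the topological fact that a constructible, generization-stable subset of a Noetherian sober space is open). Constructibility is obtained by Noetherian induction: stratify the base by where $\Hc$ is ample, and on each non-ample stratum extend an ample class $L$ from the generic fiber to a neighborhood and replace $H$ by $H+\epsilon L$ for $\epsilon$ small enough (using local finiteness of the wall arrangement) so that stability is unchanged; generization uses properness of the Quot scheme and constancy of slopes in flat families. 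None of this is recoverable from ``the stable locus is cut out as an open substack,'' and your stated reason for needing stability $=$ semistability at this step (closure under specialization within the semistable locus) is about properness, not openness; in the paper the hypothesis enters here only to upgrade openness of the semistable locus to openness of the stable locus.

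Second, your proposal does not actually produce the fiberwise \emph{schemeness} of $\Mcal \to \Kc$. The good moduli space coming from the existence criterion is a priori only an algebraic space, and you explicitly decline to relate it to the GIT construction, proposing instead a ``universal property and adequacy'' argument that does not yield a scheme. The paper's Lemma on schemeness supplies the missing idea: for big and nef $H$ choose an ample $H_1$ with $(H,H_1]$ meeting no wall; if a sheaf were $H$-stable but $H_1$-unstable, the destabilizing subsheaf would give a class $\delta$ with $H\cdot\delta<0$ and $H_1\cdot\delta>0$, hence a strictly semistable point on the segment, contradicting the stable-equals-semistable hypothesis; so $H$-stability coincides with $H_1$-stability and the fiber is the classical projective moduli scheme for $H_1$. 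A smaller inaccuracy: you call base-change compatibility of good moduli spaces ``subtle and not automatic,'' but Alper proves that good moduli spaces are stable under arbitrary base change, and the paper's fiberwise identification relies on exactly this; the genuinely nontrivial relative step is rather the gluing of the charts $M_K$ into a quotient stack $[M_K/P]$ via a groupoid presentation of $\Kc$ and the verification that the induced $\nu:\Qc\to\Mcal$ is good by flat base change. Your use of $\Theta$-reductivity plus $S$-completeness instead of the paper's $\Theta$-reductivity plus unpunctured inertia (checked via the $\Gm$ stabilizers) is a legitimate alternative formulation of the same criterion and is not a gap.
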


Then we apply the developed theory to construct the Strange Duality morphism: this requires knowing that we have a good morphism to $\Mcal$ from the moduli stack which possesses a universal family of stable sheaves. Along the way, we use the Descent Lemma (Lemma \ref{Lem:good_descent}), where we show that quasi-coherent sheaves descend along good morphisms.

\begin{Thrm}[Equation \eqref{Eq:SD_mm}]
The Strange Duality morphism exists for a pair of orthogonal K-theory vectors on the universal K3 surface $\Xc \to \Kc$. It is defined up to a twist by a line bundle.
\end{Thrm}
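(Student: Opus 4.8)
The plan is to perform the classical determinant-of-cohomology construction of the Strange Duality section relatively over $\Kc$, and then to descend the result from the stacks of stable sheaves to their good moduli spaces. Over a single K3 surface, a pair of K-theory classes $v$, $w$ that are orthogonal --- in the sense that the Euler characteristic $\chi(v \cdot w)$ of the product class vanishes --- produces a theta line bundle on each moduli space together with a canonical section on the product $M_v \times M_w$, whose zero locus is the jumping locus $\{(E,F) : \Hrm^0(E \otimes F) \neq 0\}$; the induced section of the box product of the two theta bundles is precisely what yields the Strange Duality map on global sections via a Künneth decomposition. I would reproduce this data fiberwise over $\Kc$, compatibly with base change.

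First I would work on the stacks rather than on the good moduli spaces, because the stacks $\Qc_v \to \Kc$ and $\Qc_w \to \Kc$ carry universal families $\Ec_v$, $\Ec_w$, each well defined up to a twist by a line bundle pulled back from $\Kc$. On the relative product $\Qc_v \fprod{\Kc} \Qc_w$ I would pull both families back to the universal surface, form their derived tensor product, and push it forward along the projection to $\Qc_v \fprod{\Kc} \Qc_w$. Orthogonality forces the relative Euler characteristic of this complex to vanish, so the perfect complex it defines has rank zero, and its determinant is a line bundle equipped with a canonical section vanishing exactly where the relative cohomology jumps. The standard splitting of the determinant of cohomology identifies this line bundle with a box product $\theta_w \boxtimes \theta_v$, where $\theta_w$ lives on $\Qc_v$ and $\theta_v$ on $\Qc_w$; these are the relative theta bundles.

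The second step is descent. Theorems~\ref{Thrm:good_mm} and~\ref{Thrm:construct_Mc} provide good morphisms $\Qc_v \to \Mcal_v$ and $\Qc_w \to \Mcal_w$, which I would assemble into a good morphism on the relative product $\Qc_v \fprod{\Kc} \Qc_w \to \Mcal_v \fprod{\Kc} \Mcal_w$, and then apply the Descent Lemma (Lemma~\ref{Lem:good_descent}) to push the theta line bundle and its canonical section down to $\Mcal_v \fprod{\Kc} \Mcal_w$. Pushing the resulting section of $\theta_w \boxtimes \theta_v$ forward along the two projections to $\Kc$ and invoking the relative Künneth formula then produces the morphism of sheaves on $\Kc$ recorded in equation~\eqref{Eq:SD_mm}, of the shape $\bigl( (\pi_v)_* \theta_w \bigr)^\vee \to (\pi_w)_* \theta_v$. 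The ambiguity in $\Ec_v$, $\Ec_w$ by line bundles from $\Kc$ propagates through the determinant to $\theta_w$ and $\theta_v$, so the morphism is canonical only up to a twist by such a line bundle, as asserted.

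The main obstacle I anticipate is the descent step: one must check that the good morphism on the relative product is genuinely good, so that Lemma~\ref{Lem:good_descent} applies, and that the canonical \emph{section} descends and not merely the underlying line bundle. Verifying that the determinant-of-cohomology construction commutes with arbitrary base change over $\Kc$, and that the box-product splitting is valid at the level of stacks rather than schemes, will require care with flatness and with cohomology and base change in the stacky setting. Once descent is established, the passage from the canonical section to the duality morphism is formal.
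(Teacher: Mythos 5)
Your overall strategy --- determinant of cohomology on the stacks, descent to the good moduli spaces, pushforward to $\Kc$ and K\"unneth --- matches the paper's, but there is a genuine gap at the step you yourself flag as the main obstacle, and you misdiagnose where the difficulty lies. The problem is not whether the product morphism $\Qc_v \fprod{\Kc} \Qc_w \to \Mcal_v \fprod{\Kc} \Mcal_w$ is good (it is, by base change), nor primarily whether the canonical section descends. The problem is that the line bundle $\det p_*(E \otimes q^* w)$ has, a priori, \emph{no descent datum at all} along a good morphism whose fibers are $B\Gm$: the universal family $E$ does not descend to the good moduli space because the $\Gm$-stabilizer of a stable sheaf acts on $E$ with weight one, hence acts with weight one on the fibers of the pushforward complex $p_*(E \otimes q^* w)$. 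Lemma~\ref{Lem:good_descent} only applies once one exhibits a genuine descent datum satisfying the cocycle condition, and producing one is the actual content of the paper's Lemmas~\ref{Lem:twisted_descent_datum} and~\ref{Lem:theta_on_ptd}: the natural gluing isomorphism for $p_*(E \otimes q^* w)$ on an atlas is twisted by the line bundle $T$ defining the $\Gm$-gerbe, $\psi : r_1^* F \to r_2^* F \otimes T$, and only after taking determinants does the twist become $T^{\rk F}$, which is trivial precisely because $\rk p_*(E \otimes q^* w) = 0$ by orthogonality of $v$ and $w$. So orthogonality is not merely what gives you a rank-zero complex and a canonical section; it is what makes the theta line bundle descend in the first place. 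Your proposal uses the rank-zero observation only for the former purpose and never confronts the stabilizer weight, so as written the descent step does not go through.

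A secondary discrepancy: you attribute the ``up to a twist by a line bundle'' ambiguity to the non-uniqueness of the universal families. In the paper the twist has a different and unavoidable source: the identification of $\Oc(\Theta)$ with $\Theta_w \boxtimes \Theta_v$ on $\Mcal_v \fprod{\Kc} \Mcal_w$ is a see-saw argument that only determines the two sides up to a line bundle $\Tc$ pulled back from $\Kc$ (and, similarly, descending from the pointed space $\Nc_v$ to $\Mcal_v$ costs a power of the quasi-polarization $\Hc$). Your ``standard splitting of the determinant of cohomology'' into a box product is exactly this see-saw step, where the cited Marian--Oprea fiberwise computation enters; it is not a formal consequence of the determinant construction and is itself only valid up to the twist that appears in the statement.
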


\begin{Rem}
We attempted to use the Marian-Oprea trick (from their paper \cite{MO}) to extend the generic Strange Duality isomorphism to degree two.
Employing the relative moduli space construction from the present paper, it works as follows: working with an elliptic K3 surface (which in degree two lies in the quasi-polarized locus), use a Fourier-Mukai functor to establish a birational isomorphism of a pair of Hilbert schemes with a pair of moduli spaces of higher-rank sheaves, and using functoriality, identify the theta divisors on the two spaces; this proves the Strange Duality over the elliptic locus; by continuity, the Strange Duality morphism would be an isomorphism on a dense open substack. For this to work, one needs to find a pair of orthogonal vectors of rank at least three and a suitable Fourier-Mukai kernel to get to a pair of vectors of rank one whose sum of determinants is big and nef. The author could not find such vectors for the following choices of kernel: the ideal sheaf of the diagonal on the fibered square of the K3 surface, and a universal sheaf classifying rank $d+1$, degree $d$ stable fiber sheaves. The author is working on a more explicit description of other possible Fourier-Mukai kernels.
\end{Rem}

\subsubsection*{Outline of the paper}
We start with constructing relative moduli spaces in Section \ref{Sec:Rel_msp}. We first show that the stack of stable sheaves with respect to the universal quasi-polarization is algebraic. Then we recall some theory of good moduli spaces, and prove Descent Lemma (Lemma \ref{Lem:good_descent}) for good morphisms. Finally, we construct the relative space of stable sheaves locally over schematic charts of $\Kc$, and then glue the resulting spaces using their universal properties.  
Then, we apply the developed theory to the Strange Duality.
In Section \ref{Sec:SD_mm}, we start with generalizing Marian-Oprea's construction of the theta line bundles, and use it to extend the Strange Duality morphism to the quasi-polarized locus.

\subsubsection*{Conventions}
We work over an algebraically closed field of characteristic zero.
We write $(-)^\vee$ for the derived dual of a sheaf and $-\otimes -$ for the derived tensor product.
Given a morphism of schemes $f : X \to Y$, we denote by $f_*$ and $f^*$ the derived functors of pushforward and pullback, respectively. When we want to work with the classical functors instead of derived, we write $\Lab f^*$ for the nonderived pullback and $\Rab f_*$ for nonderived pushforward.
Note however that we distinguish between $\mathrm{Hom}$ and $\mathrm{RHom}$ (because $\mathrm{Hom}$ makes sense in the derived category on its own).

For the moduli theory of sheaves, whenever we say stability, we mean slope stability with respect to a chosen quasi-polarization.
We generally need a way to fix a numerical characteristic of the sheaves in question in order to obtain any finiteness results.
So, for a stack $\Xc$, we use zeroth algebraic K-theory $\K \Xc$ and zeroth topological K-theory $\Kt \Xc$ (defined by Blanc \cite{Blanc} for $\C$-stacks, and by Blanc, Robalo, To\"en, Vezzosi \cite{BRTV} in greater generality).
For a complex variety $X$, we can also define oriented topological K-theory $\Kor X$ by fixing the determinant of a topological K-theory vector. 
We will call a vector $v$ in any K-theory $\Kany X$ a fixed
\emph{numerical characteristic}, or
\emph{K-theory class}. When we need to be specific, we will add adjectives algebraic, topological or oriented topological to refer to the corresponding variants of K-theory.

Let $\Chow X$ denote the Chow ring of a smooth projective variety $X$.
It is well-known that there is a function called Chern character $\ch : \Db X \to \Chow X$, from objects of the derived category to the Chow ring, that factors as a ring homomorphism through the Grothendieck group: $\ch : \K X \to \Chow X$. Note that the Euler pairing descends to each of the K-groups by taking representative complexes $E$ and $F$ and computing Euler characteristic of their derived tensor product:
$$\chi(E \otimes F) \deq \chi \left( \RGamma(E \otimes F) \right)
\mbox{.}$$

We don't use Chern classes a lot, and instead we prefer to write a K-theory vector $v$ in terms of components of its Chern character: $\chn_0 v = \rk v$, $\chn_1 v = \cn_1 v$, $\chn_2 v = \frac{1}{2} (\cn_1 v)^2 - \cn_2 v$, etc.

\subsection*{Acknowledgements}
First and foremost, I would like to thank my adviser Davesh Maulik for suggesting the topic and insightful discussions throughout the work.
I have benefitted from discussions related to this work with
Alina Marian (my deepest appreciation for helping me escape deadends and commenting on the paper draft),
Dragos Oprea (for the comments on the paper draft),
Dmitry Makarov (for helping with a setup of a brute force algorithm),
Valery Alexeev,
Arend Bayer,
Dori Bejleri,
Tom Bridgeland,
Elden Elmanto, 
Nikon Kurnosov,
Emanuele Macr\`i,
Eyal Markman,
Tony Pantev,
David Rydh,
Evgeny Shinder,
Kota Yoshioka,
Xiaolei Zhao.
In addition, I am extremely grateful for the work of an anonymous referee who suggested many remarks that improved the clarity of exposition.

\section{Relative moduli spaces of stable sheaves with respect to a quasi-polarization}

\label{Sec:Rel_msp}

Let $\Kc$ be a stack of quasi-polarized projective surfaces that admits a universal family $u: \Xc \to \Kc$ with universal quasi-polarization $\Hc$. It means that we want $\Hom{}{T,\Kc}$ to classify families on a scheme $T$ given by pullbacks of $\Xc$ and $\Hc$ to $T$. Fix a K-theory class $v$ over $\Xc \to \Kc$.

For the main application, $\Kc$ will be the moduli stack of quasi-polarized K3 surfaces and  $u: \Xc \to \Kc$ will be the universal quasi-polarized K3 surface with quasi-polarization $\Hc$. Our aim is to define the relative moduli space of slope stable sheaves
$\Mcal \deq \Mcal_v \to \Kc$.

Our idea is to start with the moduli functor $\widetilde \Mcal$ of all flat families of sheaves of fixed K-theory class. Usually properness of support is assumed, but in our case it is an automatic condition due to the projectivity assumption. It is known that this functor is representable by an Artin stack, see for example a very general result of Lieblich (\cite{Lieb}, the main theorem). Then we will prove that the subfunctor $\Qc \subset \widetilde \Mcal$ of stable sheaves is open, hence also is an Artin stack. This is well-known when quasi-polarization is ample, but we will need additional technical arguments in order to generalize it to the non-polarized locus of K3 surfaces. 
In the next step, we observe that $\Qc$ admits a ``good moduli space morphism'' onto a relative moduli space, which will be denoted by $\Mcal = \Mcal_v$. For this, we use the result of existence of good moduli spaces by Alper, Heinloth and Halpern-Leistner \cite{AHHL}. Note that the fiber of $\Mcal$ is a scheme over each K3 surface $[X] \in \Kc$, but globally $\Mcal$ is still a stack.

Pointwise it is well-known that $\Mcal$ is a scheme for the polarized case.
However, to our knowledge,
the case of non-ample quasi-polarization,
and a construction of a good moduli space morphism are new results. They are summarized in the main theorem of this section:

\begin{Thrm*}[\emph{cf.} Theorem~\ref{Thrm:construct_Mc}]
    Let $\Kc$ be a stack of quasi-polarized surfaces that admits the universal surface $\Xc$ with the universal quasi-polarization $\Hc$.
    Fix a K-theory class $v$ over $\Xc$.
    Assume that,
    pointwise over $\Kc$,
    stability is equivalent to semistability for sheaves in class $v$.
    Then there exists a stack $\Mcal \to \Kc$ which is fiberwise
    $($\textit{i.e.} over each closed point of $\Kc)$
    the moduli scheme of stable sheaves of class $v$ with respect to the restriction of the universal quasi-polarization.
\end{Thrm*}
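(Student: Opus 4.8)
The plan is to obtain $\Mcal$ as the relative good moduli space of the stack $\Qc$ of stable sheaves, and then to read off its fibres. I would begin with the ambient stack $\widetilde{\Mcal} \to \Kc$ of all flat families of coherent sheaves of class $v$ on the fibres of $u : \Xc \to \Kc$; by Lieblich's representability theorem \cite{Lieb} this is algebraic, and properness of support is automatic from the projectivity of the fibres. Inside it sits the subset $\Qc$ of those sheaves that are stable on the fibre over their image in $\Kc$, and the first substantial task is to prove that $\Qc \subset \widetilde{\Mcal}$ is \emph{open}, so that $\Qc$ is again an algebraic stack.

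Openness is classical when $\Hc$ is ample, and the real work is to remove this hypothesis. For a geometric point with stable sheaf $E$ on a fibre $X$, I must show that every deforming sheaf in a flat family stays stable. I would control this by (i) \emph{boundedness}: the family of sheaves of class $v$ and, more importantly, of their potentially destabilizing subsheaves is bounded --- here I would appeal to Langer's boundedness for sheaves semistable with respect to a nef class, in place of the classical argument that presupposes ampleness; (ii) \emph{semicontinuity}: along a family the slope and the maximal slope of a subsheaf vary semicontinuously, so instability is a closed condition; and (iii) the standing hypothesis that stability and semistability coincide in class $v$, which collapses the strictly semistable stratum and makes the stable locus the honest complement of a closed set. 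The delicate point in (i)--(ii) is that subsheaves supported on the curves contracted by $\Hc$ do not see the quasi-polarization; the big-and-nef positivity is what still bounds their numerical contribution and rescues the semicontinuity argument.

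Next I would produce the good moduli space. Working over an \'etale schematic chart $T \to \Kc$, the pullback $\Qc_T$ is of finite type with affine diagonal (the isomorphisms between sheaves are parametrised by affine schemes), so I may invoke the relative existence criterion of Alper, Halpern-Leistner and Heinloth \cite{AHHL}: $\Qc_T$ admits a good moduli space precisely when it is $\Theta$-reductive and S-complete. Both conditions hold for moduli of stable sheaves. A map from $\Theta = [\mathbb{A}^1/\Gm]$ records a weighted filtration of a sheaf; since stability equals semistability there is no nontrivial filtration with equal-slope factors, so the only such maps landing in $\Qc_T$ are trivial and $\Theta$-reductivity follows formally, while S-completeness is the content of Langton's semistable-reduction theorem, which supplies the unique limit of a family of stable sheaves over a punctured disc. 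This gives a good moduli space $\Qc_T \to \Mcal_T$ over $T$; because stable sheaves are simple, $\Qc_T \to \Mcal_T$ is a $\Gm$-gerbe, so $\Mcal_T$ is the coarse space.

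Finally I would globalize and identify the fibres. Good moduli spaces are unique, being universal among maps to algebraic spaces, and they are stable under base change by Alper, so the $\Mcal_T$ carry a canonical universal property; this lets them glue along the overlaps of the charts of $\Kc$ into a single stack $\Mcal \to \Kc$ receiving the good moduli morphism from $\Qc$. To compute a fibre, base-change along a closed point $[X] \to \Kc$: by base-change invariance, $\Mcal_{[X]}$ is the good moduli space of the stack of stable sheaves of class $v|_X$ on $X$, and because stability equals semistability on $X$ this good moduli space is the classical projective moduli scheme. I expect the main obstacle to be the openness step of the second paragraph: transporting the standard boundedness and semicontinuity inputs from the ample setting to a merely big-and-nef quasi-polarization, uniformly in the family over $\Kc$, is where the non-ampleness genuinely bites.
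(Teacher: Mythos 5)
Your overall architecture matches the paper's: algebraicity of the ambient stack via Lieblich, openness of the stable locus, existence of good moduli spaces on charts via the Alper--Halpern-Leistner--Heinloth criterion, gluing by the universal property, and identification of the fibres. However, there are two places where you correctly locate the difficulty created by non-ampleness and then do not actually resolve it, and in both places the paper's proof supplies a concrete mechanism that your sketch lacks. First, for openness: your plan is ``boundedness of destabilizing subsheaves plus semicontinuity of the maximal slope,'' with the assertion that big-and-nef positivity ``rescues'' the argument for subsheaves living on the contracted curves. That is precisely the step that does not go through as stated -- such subsheaves have zero $\Hc$-degree, so the quasi-polarization gives no control over them and the usual semicontinuity/boundedness package is unavailable. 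The paper instead proves openness as \emph{constructibility plus closure under generization} (Lemmas~\ref{constructibility} and~\ref{generization}, combined via Corollary~\ref{Lem:openness}). Constructibility is obtained by Noetherian stratification of the base: on the strictly quasi-polarized stratum one extends an ample divisor from the generic fibre to a neighbourhood, replaces $\Hc$ by the ample class $\Hc + \epsilon L$ for $\epsilon$ small enough that no wall of the arrangement of Fact~\ref{hyparr} is crossed, and then quotes the classical ample-case openness on that stratum. Closure under generization is a separate argument using properness of the Quot scheme and constancy of slopes in flat families. Your proposal contains neither the perturbation device nor the generization half.

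Second, your identification of the fibre $\Mcal_{[X]}$ with ``the classical projective moduli scheme'' silently assumes the quasi-polarization on $X$ is ample; when it is merely big and nef there is no classical construction to quote, and the good moduli space produced by \cite{AHHL} is a priori only an algebraic space. The paper needs the additional Lemma~\ref{schemeness}: choose an ample $H_1$ with the segment $(H,H_1]$ disjoint from the walls, show that $H$-stability and $H_1$-stability coincide for sheaves of class $v$ (using that a sheaf stable for one and unstable for the other would be strictly semistable at an intermediate point of the segment, contradicting the hypothesis), and thereby identify $\Mcal_{[X]}$ with the projective moduli scheme for the ample class $H_1$. Without this step the conclusion that $\Mcal \to \Kc$ is fibrewise a \emph{scheme} is not established. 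On the good-moduli-space existence step your use of $\Theta$-reductivity and S-completeness is an acceptable variant of the paper's use of $\Theta$-reductivity and unpunctured inertia (the latter verified via the $\Gm$ stabilizers), so that part is fine.
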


\subsection{Constructibility and generization}

We start with proving that the subfunctor $\Qc \subset \widetilde \Mcal$ is constructible and preserved by generization. Then, by a topological lemma, we will be able to conclude that it is an open subfunctor.

Recall that a stack is a contravariant (quasi-)functor $\Sc{}ch^{op} \to \Gc{}pd$ from the category of schemes $\Sc{}ch^{op}$ to the 2-category of groupoids $\Gc{}pd$ which satisfies a ``level two'' sheaf condition.

\begin{Def}
    Consider a moduli problem
    $\Fc : \Sc{}ch^{op} \to \Gc{}pd$ and a subfunctor $\Gc \subset \Fc$. We say that $\Gc$ is a constructible subfunctor of $\Fc$ if, for any  family $X \in \Fc(\tilde B)$ parametrized by the scheme $\tilde B$, the locus $B \deq \left\{
    b \in \tilde B \mid
    X_b \in \Gc(b)
    \right\}$
    is a constructible subset of $\tilde B$.
\end{Def}

\begin{Lem}
\label{constructibility}
    Fix a Chern character $v$ over $\Xc$.
    Assume that pointwise on $\Kc$, stability with respect to $\Hc$ is equivalent to semistability.
    Then the moduli subfunctor $\Qc \subset \widetilde \Mcal$ of stable sheaves is a constructible subfunctor. 
\end{Lem}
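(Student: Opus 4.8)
The plan is to prove the complementary statement: for an arbitrary flat family $\Ec \in \widetilde\Mcal(\tilde B)$ of class $v$ over a finite-type base $\tilde B$, the locus of fibres that fail to be stable is a constructible subset of $\tilde B$, so that its complement --- the locus $B$ of stable fibres --- is constructible as well. Since by hypothesis stability and semistability coincide at every point of $\Kc$, a fibre $\Ec_b$ is stable if and only if it is semistable, and it is therefore equivalent to show that the non-semistable locus
$$ Z \deq \left\{\, b \in \tilde B \mid \exists\, F \subsetneq \Ec_b \text{ proper nonzero saturated with } \mu_H(F) > \mu_H(\Ec_b) \,\right\} $$
is constructible, where $H$ denotes the restriction of $\Hc$ to the fibre $\Xc_b$ and $\mu_H(\blank) = \frac{\cn_1(\blank)\cdot H}{\rk(\blank)}$ is the associated slope. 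It suffices to test saturated subsheaves, since saturating a destabilizing subsheaf only raises its slope, and a fibre is non-semistable precisely when its maximal destabilizing subsheaf (the first step of the Harder--Narasimhan filtration) has slope strictly greater than $\mu_H(\Ec_b)$.

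The main engine is the theory of relative Quot schemes together with Chevalley's theorem. For a fixed numerical type $w \in \K\Xc$ with $0 < \rk w < \rk v$, the relative Quot scheme $\mathrm{Quot}(\Ec / \tilde B;\, v - w)$ parametrizing quotients $\Ec_b \twoheadrightarrow Q$ with $[Q] = v - w$ (equivalently, subsheaves of class $w$) is projective --- here I use the standing projectivity assumption on the fibres --- hence of finite type over $\tilde B$; by Chevalley's theorem its image $Z_w \subset \tilde B$ is constructible. If $w$ is \emph{destabilizing} for $v$, meaning $\frac{\cn_1(w)\cdot H}{\rk w} > \frac{\cn_1(v)\cdot H}{\rk v}$, then every subsheaf of class $w$ destabilizes its ambient fibre, so $Z_w$ is precisely the locus where a destabilizing subsheaf of class $w$ occurs. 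Since the maximal destabilizing subsheaf of a non-semistable fibre is saturated and destabilizing, I would obtain $Z = \bigcup_w Z_w$, the union taken over destabilizing classes $w$ that actually arise as classes of saturated subsheaves of the family $\{\Ec_b\}$.

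The crux is to show that this union is \emph{finite}, i.e. that only finitely many numerical types $w$ occur among destabilizing saturated subsheaves of the bounded family $\{\Ec_b\}$; this is exactly a boundedness statement, and it is here that the non-ampleness of $\Hc$ forces extra work (for an ample polarization it is the classical Grothendieck boundedness lemma). The rank is automatically confined to the finite range $0 < \rk w < \rk v$. The slope $\mu_H(F)$ is bounded below by $\mu_H(\Ec_b)$ and above by $\mu_{\max,H}(\Ec_b)$, which is uniformly bounded over the bounded family, so $\cn_1(w)\cdot H$ ranges in a finite set of values. The difficulty special to big and nef $\Hc$ is that a bound on $\cn_1(w)\cdot H$ does \emph{not} by itself bound $\cn_1(w)$, since $H$ pairs trivially with the classes contracted by the birational morphism $\pi : \Xc_b \to \overline{\Xc}_b$ to the polarized model. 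To control these I would invoke the Hodge index theorem: as $H^2 > 0$, every divisor class $D$ satisfies $D^2 \leq (D\cdot H)^2/H^2$, so $\cn_1(w)^2$ is bounded above once $\cn_1(w)\cdot H$ is; combined with the Bogomolov inequality (available in characteristic zero) controlling the discriminant of the Harder--Narasimhan pieces and with the boundedness of $\cn_2$ over $\tilde B$, this pins $\chn(w)$ down to finitely many values. Alternatively one pulls back along $\pi$ and runs the classical argument for the ample class $\overline{H}$.

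Granting this boundedness, $Z = \bigcup_w Z_w$ is a finite union of constructible subsets of $\tilde B$, hence constructible, and its complement $B$ is constructible; this exhibits $\Qc \subset \widetilde\Mcal$ as a constructible subfunctor. I expect the boundedness in the big-and-nef case to be the genuine obstacle, and I would isolate the Hodge index estimate (or the reduction to $\overline{\Xc}_b$) as a self-contained sublemma, since the remainder of the argument is a formal application of relative Quot schemes and Chevalley's theorem.
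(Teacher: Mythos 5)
Your strategy is genuinely different from the paper's. The paper stratifies the base into locally closed pieces, and on each stratum extends an ample class from the generic point to a neighbourhood, then perturbs the quasi-polarization $H$ to an honest polarization $H+\epsilon L$ without crossing any wall (using the local finiteness of walls, \cite[Lemma~4.C.2]{HuL}); this reduces everything to the classical openness of (semi)stability for an ample polarization, and Noetherian induction yields finitely many strata, hence constructibility. You instead aim for a direct argument via relative Quot schemes and Chevalley, reducing the whole lemma to a single boundedness statement for destabilizing classes with respect to the big and nef $H$. That reduction is correctly set up, and you correctly identify the boundedness as the genuine obstacle; but the sketch you give of that crucial sublemma does not work as stated, so there is a real gap.

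Concretely: the Hodge index inequality you invoke, $D^2 \leq (D\cdot H)^2/H^2$, gives an \emph{upper} bound on $\cn_1(w)^2$, and an upper bound on $D^2$ together with a bound on $D\cdot H$ does not confine $D$ to a finite subset of $\NS X$ --- writing $D=\lambda H + D'$ with $D'\in H^{\perp}$, the form on $H^{\perp}$ is negative definite, so what is needed is a \emph{lower} bound on $D^2$ (equivalently on $D'^2$) to trap $D'$ in a bounded region of a definite lattice. Such a lower bound is exactly what the wall computation of \cite[\S 4.C]{HuL} extracts from the Bogomolov inequality applied to the Harder--Narasimhan factors, but you have not carried out that computation, and the inequality you did write down points the wrong way. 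Two further assertions in your argument are themselves part of the difficulty rather than standard citations: the uniform bound on $\mu_{\max,H}$ over the family is classical only for ample $H$ (for big and nef $H$ one must, e.g., compare with an auxiliary ample class $A$ via $mA\pm H$ and bound $\cn_1(F')\cdot A$ from both sides for saturated $F'$), and the claim that $\cn_2(w)$ ranges over finitely many values for the destabilizing saturated subsheaves is again a boundedness statement that needs proof. A smaller point: the definition of constructible subfunctor quantifies over arbitrary base schemes, so one should first reduce to a (reduced) Noetherian base, as the paper does via Thomason--Trobaugh approximation, before speaking of finite type and Chevalley. If you supply the correct two-sided bound on $\cn_1(w)\cdot H$ and the lower bound on $\cn_1(w)^2$ via the Bogomolov/Hodge-index wall estimate, your route should close up and would arguably be more uniform than the paper's stratification; as written, the finiteness of the union $\bigcup_w Z_w$ is not established.
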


\begin{proof}
We will be checking constructibility by taking families of $\Qc$ and $\widetilde \Mcal$ parametrized by an arbitrary scheme $\tilde B$. Note that this condition can be checked on an open cover, so by possibly taking affine opens in $\tilde B$, we can assume that $\tilde B$ is quasi-compact and quasi-separated. 

Further, we can reduce the question to a Noetherian base by using Noetherian approximation by Thomason--Trobaugh \cite{TT} as follows. By \cite[Theorem~C.9]{TT}, a quasi-compact and quasi-separated scheme $\tilde B$ over a ground field admits an approximation by Noetherian schemes $C_i$; moreover, the bonding maps of the system are all affine:
$$  \tilde B = \lim C_i . $$

Both $\Qc$ and $\widetilde \Mcal$ evaluated at $\tilde B$ parametrize certain sheaves over $X \deq \Xc \fprod{\Kc} \tilde B$,
which, being a family of K3 surfaces, is a scheme. By \cite[Lemma~01ZM]{Stacks}, we can choose a Noetherian $X_i$ and $X_i \to C_i$ for sufficiently large $i$ such that
$X \cong \tilde B \fprod{C_i} X_i$. So, possibly taking a subset of the indexing set, we can assume that $X_i$ is chosen for all $i$.
Then by a simple category theory fact, we have $X \cong \lim X_i$. 
By \cite[Lemmas~0B8W and~05LY]{Stacks}, a flat sheaf on $X$ is a pullback of some flat sheaf on a finite step $X_i$. 
Therefore, we can study a particular flat family parametrized by a finite step $C_i$, which means that without loss of generality, we can assume that $\tilde B$ is Noetherian.

Finally, note that both constructibility and stability can be checked on closed points, so it is enough to check the condition for reduced Noetherian bases.

Let $F$ be a family of sheaves parametrized by a reduced Noetherian base $\tilde B$, that is $F \in \Coh X$ is a coherent sheaf over a family of quasi-polarized K3 surfaces $X \to \tilde B$ of Chern class $v$ and flat over $\tilde B$.
We want to show that the locus
$B \deq \left\{ b \in \tilde B \mid
F_b \mbox{ is stable} \right\}$ is constructible.

To that end, denote by $H$ the quasi-polarization of $X \to \tilde B$.
We will use Noetherian induction on the base: we will stratify $\tilde B$ with locally closed disjoint subsets $B_i$, and prove that $B \cap B_i$ is open in each $B_i$. The Noetherian property will be used to prove that the set $\left\{ B_i \right\}$ of the strata is finite.

Note that the locus $B_0$, where the quasi-polarization $H_{X_b}$ is ample, is open.
It is a standard result that semistability is open in flat families \cite[Proposition~2.3.1]{HuL}; with our assumption that semistability implies stability, we then obtain that the stable locus $B_0 \cap B$ is open in $B_0$.

Consider the strictly quasi-polarized locus $\tilde B \setminus B_0$ (\emph{i.e.} where the quasi-polarization is not ample) and pick an irreducible component $B_1$ with the generic point $\eta$. The surface $X_\eta$ is projective, so we can pick an ample line bundle $L_\eta$ over $X_\eta$. Note that $B_1$ and the restriction $X_{B_1}$ are integral schemes, hence the sheaf of total quotient rings of $\Oc_X$ is the constant sheaf $K$ with fiber equal to the field of fractions of the generic point $\kappa \in X_\eta \subset X$, and so every line bundle on $X_\eta$ comes from a Cartier divisor.
Let's say that $L_\eta \cong \Oc_{X_\eta}(D_\eta)$ for $D_\eta \in \Hrm^0 \left( X_\eta, K_{X_\eta}^\times / \Oc_{X_\eta}^\times \right)$.
We can extend this divisor to a Cartier divisor $D$ over some open subset $U'$ of $X$.

Let us for a moment denote by $f$ the morphism $X \to B$.
We now argue that $U'$ can be extended to an open set of the form $f^{-1}(U)$ for some $U \subset B$.
The morphism $f$ is a flat family of projective surfaces, so by \cite[Lemma~01UA]{Stacks}, it is open. So the set $U = f(U') \subset B$ is open.
Since every fiber is proper, one can note that every regular function is constant along a fixed fiber. Therefore, if the section corresponding to $D$ is defined at one point of a fiber, it is defined over the whole fiber. So $D_\eta$ can be extended to a divisor on $f^{-1}(U)$, and we get an extension of $L_\eta$ to $L$ over $X_U$.

Note further that being ample is an open condition, so we may assume, after possibly shrinking $U$, that $L$ is relatively ample. Now we will show that we can pick a small enough $\epsilon \in \R^{+}$ such that stability with respect to $H_U + \epsilon L$ is equivalent to stability with respect to $H_U$ for every point in $U$. The argument for local finiteness of the walls \cite[Lemma~4.C.2]{HuL} (the result is summarized in \ref{hyparr}) can be extended to a neighborhood, and on each of those, we pick $\epsilon$ as above; then by quasi-compactness of the base $U$ we can pick the minimum of the $\epsilon$'s for every open neighborhood. Now we have a polarization over $X_U$ and can deduce openness of the locus where $F_U$ is stable on a fiber with respect to the ample $H_U + \epsilon L$. This locus is exactly $B \cap U \subset B_1$.

At this point, we want to redefine $B_1$ to be $U$, and pass to consideration of the closed subset $\tilde B \setminus (B_0 \sqcup B_1)$ of $\tilde B$. The choice of the subsequent $B_i$'s is done in the same fashion.

By the Noetherian assumption, there are only finitely many $B_i$'s, and for each of those, the subset $B \cap B_i$ is open inside $B_i$. Since $B_i$ is locally closed inside $\tilde B$, we get that $B$ is equal to the finite disjoint union of locally closed subsets $B \cap B_i \subset \tilde B$, hence constructible.
\end{proof}

\begin{Def}
    Consider a moduli problem
    $\Fc : \Sc{}ch \to \Gc{}pd$ and a subfunctor $\Gc \subset \Fc$. We say that $\Gc$ is closed under generization if, for any family $X \in \Fc(\Spec R)$ parametrized by the spectrum of a valuation ring $R$
    such that the fiber of $X$ over the closed point belongs to the subfunctor $\Gc$, the generic fiber is also in $\Gc$.
\end{Def}

\begin{Lem}
\label{generization}
    Fix a Chern character $v$ over $\Xc$.
    Then the moduli subfunctor $\Qc \subset \widetilde \Mcal$ of stable sheaves is preserved under generization.
\end{Lem}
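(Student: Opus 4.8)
The statement to prove is exactly the valuation-ring form of ``the stable locus is closed under generization'', so the plan is to verify it directly on $\Spec R$ for a valuation ring $R$ with closed point $s$ and generic point $\eta$. Write $X \to \Spec R$ for the induced family of surfaces, with big and nef line bundle $H = \Hc|_X$ so that $H_s^2, H_\eta^2 > 0$, and let $F \in \Coh X$ be flat over $R$ of class $v$ with $F_s$ stable; I want to deduce that $F_\eta$ is stable. I would argue by contradiction: if $F_\eta$ is not stable, then by definition of slope stability there is a subsheaf $G \subset F_\eta$ with $0 < \rk G < \rk F_\eta$ and $\mu_{H_\eta}(G) \ge \mu_{H_\eta}(F_\eta)$, where $\mu_H(E) = \frac{\cn_1 E \cdot H}{\rk E}$. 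The goal is to spread $G$ out to a flat subfamily of $F$ over $\Spec R$ and thereby produce a destabilizing subsheaf of $F_s$.

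First I would construct the flat extension. Let $j : X_\eta \hookrightarrow X$ be the inclusion of the generic fibre and set $\tilde G \deq \ker\!\big(F \to j_*(F_\eta/G)\big)$, the subsheaf of local sections of $F$ whose restriction to $X_\eta$ lands in $G$. Then $F/\tilde G$ embeds into $j_*(F_\eta/G)$, whose sections are $\Frac R$-modules and hence $R$-torsion-free; since $R$ is a valuation ring, torsion-free is equivalent to flat, so $F/\tilde G$ is $R$-flat, and then $\tilde G$ is $R$-flat too. Consequently $\tilde G_s \deq \tilde G \otimes_R k(s)$ is a genuine subsheaf of $F_s$, because the restricted sequence $0 \to \tilde G_s \to F_s \to (F/\tilde G)_s \to 0$ stays exact, while $\tilde G_\eta = G$.

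The crux --- and the step I expect to be the main obstacle --- is showing that the slope is preserved under this degeneration, i.e.\ $\rk \tilde G_s = \rk G$ and $\mu_{H_s}(\tilde G_s) = \mu_{H_\eta}(G)$. In the ample case this is the constancy of the Hilbert polynomial in flat families \cite[Proposition~2.3.1]{HuL}, but here $H$ is only big and nef, so I cannot invoke it directly. Instead I would use that for the flat proper family $\tilde G$ the Euler characteristic $\chi(X_t, \tilde G_t \otimes H_t^{\otimes m})$ is constant for $t \in \{s,\eta\}$ and every $m$; by Riemann--Roch on each fibre this is a polynomial in $m$ whose $m^2$- and $m$-coefficients are $\frac{1}{2}(\rk \tilde G_t)\,H_t^2$ and $\cn_1 \tilde G_t \cdot H_t$, up to a rank-dependent Todd contribution that is itself constant along the family. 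Since $H_t^2 > 0$ and $H_t^2, \td(X_t)$ are constant, constancy of the polynomial forces both $\rk \tilde G_t$ and $\cn_1 \tilde G_t \cdot H_t$ to be constant, whence $\mu_{H_s}(\tilde G_s) = \mu_{H_\eta}(G)$ (and likewise $\mu_{H_s}(F_s) = \mu_{H_\eta}(F_\eta)$, the slope of $F$ being fixed by the class $v$).

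Putting this together, $\tilde G_s \subset F_s$ satisfies $0 < \rk \tilde G_s < \rk F_s$ and $\mu_{H_s}(\tilde G_s) \ge \mu_{H_s}(F_s)$, contradicting the stability of $F_s$; hence $F_\eta$ must be stable. Two technical points I would dispatch at the outset: by spreading out the finitely presented data one may reduce to the case where the base is Noetherian, which makes the coherence of $\tilde G$ and the base-change statements routine; and I would emphasize that the argument nowhere uses ampleness of $H$, only that $H$ is big and nef with $H^2 > 0$, which is precisely what allows it to run over the strictly quasi-polarized locus.
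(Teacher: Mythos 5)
Your proof is correct and follows the same core strategy as the paper's: extend the (de)stabilizing object on the generic fibre to a flat family over all of $\Spec R$, then transfer the slope inequality to the closed fibre using constancy of the quasi-Hilbert polynomial $\chi(\cdot \otimes H^{\otimes m})$ in flat families. The only real divergence is in how the extension is produced. The paper works with a quotient $F_K \to G_K$ and invokes properness of the Quot scheme (the existence part of the valuative criterion) to extend it to a flat quotient over $X$; you work with the destabilizing subsheaf and build the extension by hand as $\tilde G = \ker\bigl(F \to j_*(F_\eta/G)\bigr)$, using that torsion-free implies flat over a valuation ring. These are the same construction in different clothing --- your $\tilde G$ is exactly the kernel of the flat extension of the quotient, and your argument is essentially the standard proof of the properness statement the paper cites --- so your version is more self-contained at the cost of a few extra technicalities (coherence of $\tilde G$ over a possibly non-Noetherian $R$, which you rightly dispatch by reduction to the Noetherian/DVR case that suffices for the openness application). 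You also make explicit, via Riemann--Roch, why rank and $\cn_1 \cdot H$ are constant along the family and why $H^2 > 0$ (bigness) rather than ampleness is the real input; the paper compresses this into the one-line assertion that the slope is a rational function of coefficients of the quasi-Hilbert polynomial. Both write-ups leave the same small issues untreated (e.g.\ purity/torsion-freeness of the generic fibre), so nothing is lost relative to the paper.
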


\begin{proof}
    Assume that $F$ is a flat family over $X$ parametrized by $\Spec R$, where $R$ is a valuation ring with fraction field $K$ and residue field $k$. Assume further that $F$ is stable when restricted to the closed fiber $X_k$. We want to prove that its restriction $F_K$ to the open fiber is also stable.
    
    To that end, pick a proper quotient sheaf $F_K \to G_K \to 0$.
    
    We consider slope stability with respect to a quasi-polarization $H$
    which may not be ample. The function
    $P(n) \deq P_{G_K}^{H_K}(n) \deq \chi(X_K, G_K \otimes H_K^n)$
    is not usually called Hilbert polynomial for just big and nef line bundles, so we will call it a quasi-Hilbert polynomial. Note that this function is still a polynomial, because the standard argument still applies.
    
    Since the Quot scheme $\mathrm{Quot}_{X/R}^{F,P}$ is proper, we can extend the quotient $F_K \to G_K \to 0$ to a flat quotient $F \to G \to 0$ over $X$, by the existence part of the valuative criterion. Recall that the slope is a rational function of some coefficients of the quasi-Hilbert polynomial, therefore it is constant in flat families, so $\mu_{H_K}(G_K) = \mu_{H_k}(G_k) < \mu_{H_k}(F_k) = \mu_{H_K}(F_K)$. And so we can conclude stability of $F_K$.
\end{proof}

\subsection{Technicalities to prove openness}

In this part, we will briefly remind a topology result that connects the properties of being constructible and open following Stacks Project \cite{Stacks}.

\begin{Def}[{\cite[Definition~004X]{Stacks}}]
\label{char_constructible}
    A topological space $X$ is called \emph{sober} if every irreducible closed subset has a unique generic point.
\end{Def}

\begin{Lem}[{\cite[Lemma~0542]{Stacks}}]
\label{Lem:Stacks_openness}
    Let $X$ be a Noetherian sober topological space. Let $E\subset X$ be a subset of $X$.
    If $E$ is constructible and stable under generization, then $E$ is open.
\end{Lem}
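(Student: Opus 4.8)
The plan is to prove the dual statement, namely that the complement $Z \deq X \setminus E$ is closed; this is equivalent to the claim, and in this form the hypotheses become easier to exploit. Since $X$ is Noetherian, the constructible subsets form a Boolean algebra, so $Z$ is again constructible. Moreover, if $E$ is stable under generization, then $Z$ is stable under \emph{specialization}: a set $S$ is stable under specialization if $x' \in S$ and $x \in \overline{\{x'\}}$ force $x \in S$, and were there a point $x \in E$ admitting a generization $x' \in Z$, the generization hypothesis on $E$ would put $x' \in E$, a contradiction. So it remains to show that a constructible, specialization-stable subset $Z$ of a Noetherian sober space is closed, i.e. $\overline{Z} \subseteq Z$.

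First I would use the Noetherian hypothesis to decompose $\overline{Z} = W_1 \cup \cdots \cup W_m$ into its finitely many irreducible components, and soberness (Definition~\ref{char_constructible}) to produce, for each $j$, the unique generic point $\eta_j$ with $\overline{\{\eta_j\}} = W_j$. The reduction then hinges on showing $\eta_j \in Z$ for every $j$. The geometric input is that $Z \cap W_j$ is dense in $W_j$: from $Z = \bigcup_j (Z \cap W_j)$ one gets $\overline{Z} = \bigcup_j \overline{Z \cap W_j}$ with each $\overline{Z \cap W_j} \subseteq W_j$, so $W_j \subseteq \bigcup_i \overline{Z \cap W_i}$; irreducibility of $W_j$ forces $W_j \subseteq \overline{Z \cap W_i}$ for some $i$, and since an irreducible component is contained in no other component this gives $i = j$, whence $W_j = \overline{Z \cap W_j}$.

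The hard part, and the only place where constructibility is genuinely used, is upgrading density to membership of the generic point. Writing the constructible set $Z \cap W_j$ as a finite union of subsets locally closed in $W_j$, irreducibility of $W_j$ forces one of them, say $O \cap C$ with $O$ open and $C$ closed in $W_j$, to be dense; then $W_j = \overline{O \cap C} \subseteq C$ forces $C = W_j$, so that $O \cap C = O$ is a nonempty open subset of $W_j$ and therefore contains $\eta_j$, proving $\eta_j \in Z$. Density alone would not suffice here: in $\Spec \Z$ the set of closed points is dense yet omits the generic point, and the point is precisely that this dense set fails to be constructible.

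Finally, stability under specialization closes the argument. Since $W_j = \overline{\{\eta_j\}}$ consists exactly of the specializations of $\eta_j \in Z$, we conclude $W_j \subseteq Z$ for every $j$, hence $\overline{Z} = \bigcup_j W_j \subseteq Z$ and $Z$ is closed. Dualizing back, $E = X \setminus Z$ is open, as desired.
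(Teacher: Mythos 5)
Your proof is correct. Note that the paper offers no proof of its own here --- it simply cites \cite[Lemma~0542]{Stacks} --- and your argument is essentially a self-contained reproduction of that source's proof: pass to the complement $Z$, which is constructible and stable under specialization; use Noetherianity and soberness to reduce to the irreducible components of $\overline{Z}$ with their generic points; invoke the key fact that a dense constructible subset of an irreducible space contains a nonempty open set, hence the generic point; and conclude by specialization stability. All the steps check out, including the one genuinely delicate point (upgrading density to containment of the generic point), which you correctly isolate as the place where constructibility is indispensable.
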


\begin{Cor}
\label{Lem:openness}
    Let $X$ be a Noetherian scheme and $E \subset X$ a constructible subset which is preserved by generization. Then $E$ is open in X.
\end{Cor}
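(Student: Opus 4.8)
The plan is to deduce this immediately from Lemma~\ref{Lem:Stacks_openness} by checking that the underlying topological space of a Noetherian scheme satisfies the two hypotheses of that lemma, namely that it is a Noetherian topological space and that it is sober in the sense of Definition~\ref{char_constructible}. Once these are in place, and once one observes that ``preserved by generization'' for the subset $E$ is literally the condition ``stable under generization'' appearing in the lemma, there is nothing further to do.

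First I would recall that the underlying topological space of a Noetherian scheme is a Noetherian topological space. Indeed, $X$ admits a finite cover by affine opens $\Spec A_i$ with each $A_i$ a Noetherian ring; the spectrum of a Noetherian ring is a Noetherian topological space, since descending chains of closed subsets correspond to ascending chains of radical ideals, and a space admitting a finite cover by Noetherian open subspaces is itself Noetherian. Second, I would invoke sobriety: the underlying space of any scheme is sober. This can be checked on the affine cover, where an irreducible closed subset of $\Spec A$ is of the form $V(\pfr)$ for a unique prime $\pfr$, which is then its unique generic point; sobriety is a local property and therefore glues over the cover.

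With both hypotheses verified, Lemma~\ref{Lem:Stacks_openness} applies verbatim to the constructible, generization-stable subset $E$, yielding that $E$ is open in $X$. I do not expect any genuine obstacle here: the substantive content is entirely contained in the cited topological lemma, and this corollary merely records that its two regularity hypotheses (Noetherian and sober) hold automatically for the topological space of a Noetherian scheme. The only point worth a sentence of care is matching terminology, confirming that the topological notion of a subset being \emph{preserved by generization} coincides with the \emph{stable under generization} hypothesis of Lemma~\ref{Lem:Stacks_openness}, which it does by definition.
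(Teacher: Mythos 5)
Your proposal is correct and matches the paper's proof exactly: both simply observe that the underlying space of a Noetherian scheme is Noetherian and sober, then apply Lemma~\ref{Lem:Stacks_openness}. You merely spell out the standard verifications (finite affine cover for Noetherianity, $V(\pfr)$ for sobriety) that the paper leaves implicit.
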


\begin{proof}
We observe that the topological space of a Noetherian scheme is Noetherian sober. Then we apply Lemma \ref{Lem:Stacks_openness}.
\end{proof}

\subsection{Good morphisms}

In this subsection we remind the definition of a \emph{good morphism}, as introduced by Alper.
Our plan is to first prove that $\Qc$ -- the stack of stable flat sheaves of a fixed K-theory class -- admits a good moduli space when pulled back to a scheme; for this, we will heavily cite the work of Alper, Heinloth and Halpern-Leistner on existence of good moduli spaces \cite{AHHL}.
Then we show that these glue to a ``relative good moduli space'' $\Qc \to \Mcal$.

\begin{Def}
We now recall Alper's \cite{Alp} definition of good morphisms.
\begin{enumerate}[label = (\roman*)]
    \item
    Let $\Xc$ and $\Yc$ be two Artin stacks with a quasi-compact morphism $f : \Xc \to \Yc$. We call $f$ a \emph{good morphism} if
    $\Rab f_* : \QCoh \Xc \to \QCoh \Yc$ is exact and the natural map
    $\Oc_\Yc \to \Rab f_* \Oc_\Xc$ is an isomorphism.
    \item
    If in the previous definition $\Yc = Y$ is an algebraic space, then $f : \Xc \to Y$ is called a \emph{good moduli space}.
\end{enumerate}
\end{Def}

\begin{Rem}
    We adopt a shorter terminology ``good morphism'', while Alper calls that a ``good moduli space morphism''. Our choice is motivated by the belief that the notion of a good morphism is more fundamental than its application to moduli theory. One argument in support of this point of view is that good morphisms satisfy descent (for purely formal reasons), as we show now in Lemma \ref{Lem:good_descent}. We shall use the lemma later.
\end{Rem}

Let us recall a fundamental result about universality of good moduli spaces.

\begin{Def}
Given a morphism of stacks $\nu : \Qc \to \Nc$, denote the natural projections by
$\nu_i : \Qc \fprod{\Nc} \Qc \to \Qc$
and $\nu_{ij} : \Qc \fprod{\Nc} \Qc \fprod{\Nc} \Qc \to \Qc \fprod{\Nc} \Qc$.
We define the objects of the \emph{category of descent data}
$\QCoh (\nu : \Qc \to \Nc)$
as tuples $(F,\iota)$, with $F \in \QCoh(\Qc)$ and an isomorphism $\iota : \nu_1^* F \to \nu_2^* F$ subject to the \emph{cocycle condition}
$\nu_{13}^* \iota =
    \nu_{23}^* \iota \circ
    \nu_{12}^* \iota$.
Morphisms are those morphisms of quasi-coherent sheaves which commute with $\iota$.
\end{Def}

\begin{Def}
We say that $\QCoh$ \emph{satisfies descent along a morphism}
$\nu : \Qc \to \Nc$ if the functor given by 
$\Lab \nu^* : \QCoh \Nc \to \QCoh (\nu : \Qc \to \Nc)$ is an equivalence of categories.
\end{Def}

\begin{Lem}[Descent Lemma]
\label{Lem:good_descent}
    quasi-coherent sheaves satisfy descent along good morphisms.
\end{Lem}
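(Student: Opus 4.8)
The plan is to show that $\Lab\nu^* : \QCoh \Nc \to \QCoh(\nu : \Qc \to \Nc)$ is an equivalence by exhibiting its quasi-inverse through the right adjoint $\Rab\nu_*$, organized via the Barr--Beck comonadicity formalism. The only geometric inputs are the two defining properties of a good morphism together with their stability under base change; everything else is the formal machinery of adjunctions, which is why the Remark advertises the result as holding ``for purely formal reasons.'' Throughout I use the adjunction $\Lab\nu^* \dashv \Rab\nu_*$ and the projection formula for good morphisms, $\Rab\nu_*(\Lab\nu^* G \otimes F) \cong G \otimes \Rab\nu_* F$.

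First I would record the consequences of goodness at the level of the plain pullback $\Lab\nu^* : \QCoh\Nc \to \QCoh\Qc$. Combining the projection formula with the isomorphism $\Oc_\Nc \to \Rab\nu_* \Oc_\Qc$ gives, for every $G \in \QCoh\Nc$,
\[
\Rab\nu_* \Lab\nu^* G \;\cong\; \Rab\nu_*(\Oc_\Qc \otimes \Lab\nu^* G) \;\cong\; (\Rab\nu_*\Oc_\Qc)\otimes G \;\cong\; G,
\]
and one checks this is the unit of the adjunction. Hence the unit $\mathrm{id} \to \Rab\nu_*\Lab\nu^*$ is an isomorphism, so $\Lab\nu^*$ is fully faithful; applying $\Rab\nu_*$ also shows that $\Lab\nu^*$ is conservative. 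Fully faithfulness at the level of descent data is then immediate: the functor in the statement factors as $\QCoh\Nc \xrightarrow{\,H\,} \QCoh(\nu) \xrightarrow{\,U\,} \QCoh\Qc$, where $U$ is the (manifestly faithful) forgetful functor and $UH = \Lab\nu^*$ is fully faithful, which forces $H$ to be fully faithful.

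For essential surjectivity I would invoke comonadic descent. The functor $\Lab\nu^*$ has the right adjoint $\Rab\nu_*$, is conservative, and its source $\QCoh\Nc$ is abelian, hence has equalizers of $\Lab\nu^*$-split pairs, which $\Lab\nu^*$ automatically preserves because split equalizers are absolute; by the comonadicity theorem $\Lab\nu^*$ is comonadic, so $\QCoh\Nc$ is equivalent to the category of coalgebras for the comonad $\Lab\nu^*\Rab\nu_*$ on $\QCoh\Qc$. It remains to identify these coalgebras with $\QCoh(\nu)$. Here I would use that $\nu_1$ and $\nu_2$ are base changes of $\nu$, hence again good, together with the base-change isomorphism $\Lab\nu^*\Rab\nu_* \cong \Rab\nu_{1*}\Lab\nu_2^*$ for the defining Cartesian square. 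Under the adjunction $\Lab\nu_1^* \dashv \Rab\nu_{1*}$, a coaction $F \to \Lab\nu^*\Rab\nu_* F \cong \Rab\nu_{1*}\Lab\nu_2^* F$ corresponds exactly to a morphism $\iota : \Lab\nu_1^* F \to \Lab\nu_2^* F$; the counit and coassociativity axioms translate into $\iota$ being an isomorphism and into the cocycle condition $\nu_{13}^*\iota = \nu_{23}^*\iota \circ \nu_{12}^*\iota$ on the threefold product, and morphisms of coalgebras become morphisms of descent data. This yields $\QCoh\Nc \simeq \QCoh(\nu)$, as desired.

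The main obstacle is the base-change input: passing from coalgebras over $\Lab\nu^*\Rab\nu_*$ to the \v{C}ech descent data of the statement requires both that good morphisms be stable under base change (so that $\nu_1,\nu_2$ and the projections off the threefold product are again good) and that the Beck--Chevalley map $\Lab\nu^*\Rab\nu_* \to \Rab\nu_{1*}\Lab\nu_2^*$ be an isomorphism. For good morphisms this is exactly where exactness of pushforward and preservation of $\Oc$ are used in an essential, non-formal way; once it is in hand, matching the cocycle condition with coassociativity on the threefold fiber product is a routine diagram chase.
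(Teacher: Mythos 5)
Your proof is correct in substance, but it reaches the conclusion by a genuinely different route from the paper's. Both arguments share Step 1 verbatim: the projection formula for good morphisms plus $\Rab \nu_* \Oc_\Qc \cong \Oc_\Nc$ make the unit an isomorphism, so $\Lab \nu^*$ is fully faithful, and full faithfulness of the descent functor follows formally. For essential surjectivity, however, the paper argues directly: it first shows that the exact functor $\Rab \nu_*$ \emph{detects zero objects} among descent data (using the gluing isomorphism together with Alper's base change to write $G \cong \Rab q_{1*}\Lab q_2^* G \cong \Lab\nu^*\Rab\nu_* G$), and then takes the kernel and cokernel of the counit $\Lab\nu^*\Rab\nu_* F \to F$, applies the exact $\Rab\nu_*$, and concludes both vanish. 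You instead invoke comonadic descent and the B\'enabou--Roubaud-style identification of coalgebras over $\Lab\nu^*\Rab\nu_*$ with \v{C}ech descent data via the Beck--Chevalley isomorphism. Both routes consume exactly the same non-formal inputs (projection formula and base change for good morphisms, the latter applied along the non-flat morphism $\nu$ itself); yours is more conceptual and makes the ``purely formal'' slogan precise, while the paper's is more elementary and keeps the use of exactness of $\Rab\nu_*$ visible. One imprecision in your write-up: the clause ``which $\Lab\nu^*$ automatically preserves because split equalizers are absolute'' conflates $\Lab\nu^*$-split pairs (pairs whose \emph{image} admits a split equalizer downstairs) with pairs already split in $\QCoh \Nc$; absoluteness only handles the latter, and $\Lab\nu^*$ is not exact in general since $\nu$ need not be flat. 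The gap is harmless here because the unit being an isomorphism makes the comonad $\Lab\nu^*\Rab\nu_*$ idempotent, and a fully faithful left adjoint is automatically comonadic onto the coalgebras of its idempotent comonad (equivalently, one checks the counit is invertible on retracts of objects in the image of $\Lab\nu^*$); you should replace the absoluteness appeal by this observation. With that repair, the remaining work --- checking that a descent datum $(F,\iota)$ yields a coassociative, counital coaction, hence an invertible counit --- is indeed the routine diagram chase you describe.
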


\begin{proof}
Take a good morphism $\nu : \Qc \to \Nc$. We will argue that the functor $\Lab \nu^* : \QCoh \Nc \to \QCoh (\Qc \to \Nc)$ establishes an equivalence. Note that we have a right adjoint functor $\Rab \nu_* : \QCoh \Qc \to \Nc$. In the setup of having two adjoint functors, it is enough to prove that $\Lab \nu^*$ is fully faithful and the exact $\Rab \nu_*$ ``detects zero objects'' in $\QCoh (\Qc \to \Nc)$. At a glance, it is not obvious that $\Rab \nu_*$ should be a quasi-inverse, but being a good moduli space morphism (term introduced by Alper in his thesis paper \cite{Alp}) is a strong condition, so it will follow from the proof.
    
    First we will prove that $\Lab \nu^*$ is fully faithful. So consider the moprhism
    $$  \Lab \nu^* : \Hom{\Nc}{F,G} \to \Hom{\Qc}{\Lab \nu^* F, \Lab \nu^* G}
    \mbox{.} $$
    Note that by adjunction, the right hand side is isomorphic to:
    $$  \Hom{\Qc}{\Lab \nu^* F, \Lab \nu^* G} =
    \Hom{\Nc}{F,
    \Rab \nu_* (\Oc_\Qc \otimes \Lab \nu^* G)}
    \mbox{.} $$
    Further, Alper proved projection formula that is applicable in this setting, see his Proposition 4.5 together with Remark 4.4 in his paper \cite{Alp}, so we in fact can simplify the right hand side and get a morphism:
    $$  \Lab \nu^* : \Hom{\Nc}{F,G} \to \Hom{\Nc}{F,
    \Rab \nu_* (\Oc_\Qc) \otimes G}
    \mbox{.} $$
    But since by assuption we have $\Rab \nu_* (\Oc_\Qc) \cong \Oc_\Nc$, we get that $\Rab \nu^*$ induces an isomorphism on Hom-spaces, as desired. In particular, it follows that $\Rab \nu_* \Lab \nu^*$ is isomorphic to the identity functor.
    
Now we prove that $\Rab \nu_*$  ``detects zero objects''. Let $(G,\iota) \in \QCoh (\Qc \to \Nc)$ and assume that $\Rab \nu_* G = 0$. If $q_1$ and $q_2$ are two projections $\Qc \fprod{\Nc} \Qc \to \Qc$, then the gluing data $\iota$ is a fixed isomorphism $\iota : \Lab q_1^* G \to \Lab q_2^* G$. Now we would like to apply Alper's base change formula for good moduli space morphisms (see Lemma~4.7(iii) together with Remark~4.4 in \cite{Alp})
    to $\Rab q_{1*} \iota$ to get an isomorphism:
    $$ G \cong \Rab q_{1*}\Lab q_1^* G
    \cong \Rab q_{1*}\Lab q_2^* G \cong
    \Lab \nu^* \Rab \nu_* G = 0
    \mbox{.}$$
    
    To conclude that $\Lab \nu^*$ establishes an equivalence, we can formally observe that it is essentially surjective. Indeed, take some object $F \in \QCoh (\Qc \to \Nc)$ and consider the natural morphism:
    $$  \Lab \nu^* \Rab \nu_* F \to F
    \mbox{.} $$
    Now we can complete the sequence by kernel and cokernel and apply $\Rab \nu_* F$, which is exact, to the resulting sequence. From the above results, the middle morphism is an isomorphism, and since $\Rab \nu_* F$ detects zero objects, we conclude that both kernel and cokernel vanish. Therefore we conclude that $\Lab \nu^* \Rab \nu_* F \cong F$ and thus lies in the essential image of $\Lab \nu^*$.
\end{proof}

Returning to our situation, assume that we consider a pullback of $\Qc \to \Kc$ to any Noetherian affine scheme $K \to \Kc$, so we get an Artin stack over a Noetherian base $\Qc_K \to K$ that parametrizes flat sheaves with a fixed K-theory class over the family of quasi-polarized surfaces
$X \deq \Xc \fprod{\Kc} K \to K$.

\begin{Thrm}[{\cite[Theorem~6.6]{Alp}}]
\label{Thrm:univgms}
Suppose $\Xc$ is a locally Noetherian Artin stack and $f: \Xc \to Y $ a good moduli space. Then $f$ is universal for maps to algebraic spaces, i.e. for any algebraic space $Z$, the following natural map of sets is a bijection:
$$ f^* : \Hom{}{Y,Z} \to \Hom{}{\Xc,Z}
.$$
\end{Thrm}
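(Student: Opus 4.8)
This is Alper's universality theorem \cite[Theorem~6.6]{Alp}, so let me sketch how I would reprove it from scratch. The plan is to climb from affine targets to arbitrary algebraic spaces, using only the two defining properties of a good moduli space — exactness of $\Rab f_*$ and the isomorphism $\Oc_Y \cong \Rab f_* \Oc_\Xc$ — together with the base change and projection formulas of Alper \cite{Alp} already invoked in the proof of Lemma~\ref{Lem:good_descent}.

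First I would dispose of the affine case $Z = \Spec A$. A morphism to an affine scheme, from either a stack or an algebraic space, is the same datum as a ring homomorphism out of $A$ into global functions, so both sides of $f^*$ are computed by $\Spec A$-valued points, and the comparison reduces to the identity
\[ \Gamma(\Xc, \Oc_\Xc) \;=\; \Gamma\bigl(Y, \Rab f_* \Oc_\Xc\bigr) \;\cong\; \Gamma(Y, \Oc_Y), \]
which is exactly $\Oc_Y \cong \Rab f_* \Oc_\Xc$ applied on global sections. Hence $f^*$ is a bijection for affine $Z$. Localizing this identification also gives injectivity of $f^*$ for an arbitrary target, since $f$ is surjective and schematically dominant and therefore an epimorphism, so two factorizations agreeing after precomposition with $f$ must coincide.

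The heart of the matter is the factorization on underlying spaces. I would recall that $f$ is surjective and universally closed, that each fibre $f^{-1}(y)$ has a \emph{unique} closed point $x_0$ to which every point of the fibre specializes, and that $\Gamma\bigl(f^{-1}(y), \Oc\bigr) = k(y)$. Given any $g : \Xc \to Z$ with $Z$ a scheme, the image $g\bigl(f^{-1}(y)\bigr)$ is trapped inside any affine open $V \ni g(x_0)$: since every point $x$ of the fibre specializes to $x_0$, its image $g(x)$ lies in every open neighbourhood of $g(x_0)$, hence in $V$. On $V = \Spec A$ the restricted morphism corresponds to a ring homomorphism into the field $k(y)$, so it hits a single point. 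Thus $g$ is constant along the fibres of $f$, and consequently $g^{-1}(V)$ is \emph{saturated} — a union of whole fibres — for every open $V \subseteq Z$.

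The last step is gluing. A saturated open substack descends to an open $U \subseteq Y$ with $f^{-1}(U) = g^{-1}(V)$ and $f^{-1}(U) \to U$ again a good moduli space, by Alper's saturation lemma (a formal consequence of the base change properties of $\Rab f_*$). Covering a scheme $Z$ by affine opens $V_i$, the affine case factors each $g|_{g^{-1}(V_i)}$ uniquely through $U_i \to V_i \hookrightarrow Z$, and injectivity on the overlaps $U_i \cap U_j$ glues these into a single $\bar g : Y \to Z$ with $\bar g \circ f = g$. For $Z$ an algebraic space rather than a scheme, I would note that the assertion is étale-local on $Z$ and descend: presenting $Z$ by an étale equivalence relation of schemes and applying the scheme case to each term, the factorizations assemble by the uniqueness already established. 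I expect the saturation step to be the main obstacle, as it is precisely where one must know that fibres of $f$ have a single closed point and that $\Rab f_*$ commutes with the relevant base changes; everything topological and the gluing are then bookkeeping on top of the affine computation.
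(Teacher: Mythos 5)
The paper offers no proof of this statement: it is quoted verbatim from Alper, and the citation \cite[Theorem~6.6]{Alp} \emph{is} the proof, so the only meaningful comparison is with Alper's own argument. Your sketch does track that argument for scheme targets: the affine case via $\Gamma(Y,\Oc_Y)\cong\Gamma(\Xc,\Oc_\Xc)$, injectivity from $f$ being surjective and schematically dominant, the observation that each fibre maps to a single point because it has a unique closed point to which every point specializes and its ring of global functions is $k(y)$, and then descent of saturated opens followed by gluing over an affine cover. One caveat before the main objection: you advertise the proof as using ``only the two defining properties'' of a good moduli space, but the inputs you merely ``recall'' --- surjectivity, universal closedness, and above all uniqueness of the closed point in each fibre --- are themselves the substantial content of Alper's theorem on the main properties of good moduli spaces (\cite[Theorem~4.16]{Alp}). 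They are not formal consequences of exactness of $\Rab f_*$ together with $\Oc_Y\cong\Rab f_*\Oc_\Xc$, and a genuinely from-scratch proof would have to establish them first.

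The genuine gap is the last step, the passage from scheme targets to algebraic-space targets. Presenting $Z$ by an \'etale equivalence relation $R\rightrightarrows U$ does not let you ``apply the scheme case to each term'': the given morphism $g:\Xc\to Z$ does not lift to a morphism $\Xc\to U$, and the pullback $\Xc\times_Z U$ is a different stack carrying no good moduli space a priori, so there is nothing to which the scheme case applies. Your topological argument also fails at this point, because a point of an algebraic space need not admit an affine (or even scheme) neighbourhood in which to trap the image of a fibre, which is exactly what the argument for schemes relies on. Alper handles this by a d\'evissage rather than by descent: after reducing to the case where $Y$, hence $Z$, is quasi-compact, he filters $Z$ by quasi-compact open subspaces whose successive differences are schemes (Raynaud--Gruson/Knutson) and inducts on the length of the filtration, gluing by the uniqueness already established. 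Everything in your sketch up to that point is sound, but without some such replacement for the ``\'etale-local on $Z$'' claim the reduction does not go through.
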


\begin{Lem}
\label{Lem:good_moduli}
    Take a Noetherian scheme $K$ with a morphism $K \to \Kc$.
    Then the morphism $\Qc_K \to K$ admits a good moduli space.
\end{Lem}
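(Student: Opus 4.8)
The plan is to exploit the hypothesis that stability coincides with semistability, which forces $\Qc_K$ to be essentially an algebraic space. It is already known that $\Qc_K \to K$ is algebraic and locally of finite type: it is the stable locus, open by Corollary~\ref{Lem:openness}, inside Lieblich's stack of flat families of class $v$, and boundedness of such sheaves promotes this to finite type over the Noetherian base $K$. Now every closed point of $\Qc_K$ is a stable, hence simple, sheaf, so its automorphism group is the group $\Gm$ of scalars. This relativises: for a flat family $\Fc$ of stable sheaves with projection $p$ to the base, cohomology and base change show that $p_* \mathcal{E}nd(\Fc) = \Oc$ and that this formation commutes with base change, so the relative inertia of $\Qc_K \to K$ is exactly $\Gm$. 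Therefore $\Qc_K$ is a $\Gm$-gerbe over its rigidification $M$, obtained by removing the scalars from every automorphism group; being a rigidification by the full inertia, $M$ is an algebraic space.

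It then remains to observe that a $\Gm$-gerbe is a good moduli space morphism. Working fppf-locally on $M$, the structure map acquires the form $B\Gm \times U \to U$, and for $B\Gm \to \Spec k$ linear reductivity of $\Gm$ in characteristic zero gives both exactness of the pushforward and the isomorphism $\Oc \to \Rab \pi_* \Oc$; both properties are local on the target and therefore descend. This already exhibits a good moduli space $\Qc_K \to M$ and proves the lemma.

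For the later applications one wants $M$ to be separated over $K$, and here I would instead run the existence criterion of Alper, Halpern-Leistner and Heinloth \cite{AHHL}. Its standing hypotheses are immediate: the diagonal of $\Qc_K$ is affine because the Isom-scheme of two flat families of sheaves is affine, and the stabilizers are the linearly reductive group $\Gm$ by the computation above. The two substantial inputs are the valuative criteria. $\Theta$-reductivity reduces, for moduli of sheaves, to the existence and boundedness of Harder--Narasimhan filtrations together with properness of the relevant Quot scheme --- the same ingredients used in Lemma~\ref{generization} --- and is simplified by the absence of strictly semistable sheaves. S-completeness is precisely Langton's semistable-reduction theorem, the hypothesis stable $=$ semistable ensuring that the limit is unique up to isomorphism, which is what separatedness of $M$ demands.

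The main obstacle, in this second route, is that Langton's theorem and the underlying boundedness statements are classically proved for an \emph{ample} polarization, whereas the quasi-polarization $H$ is only big and nef. I would circumvent this exactly as in the proof of Lemma~\ref{constructibility}: by local finiteness of the walls (\cite[Lemma~4.C.2]{HuL}, summarized in~\ref{hyparr}) choose a relatively ample $L$ and a small $\epsilon$ so that, over the Noetherian base, stability with respect to $H$ agrees fiberwise with stability with respect to the ample class $H + \epsilon L$; one then applies the ample-case results to $H + \epsilon L$ and transports the conclusions back to $H$.
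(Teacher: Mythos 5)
Your proposal is correct, and it actually contains two routes, one of which coincides with the paper's and one of which is genuinely different. Your second route is essentially what the paper does: it verifies the hypotheses of \cite[Theorem~A]{AHHL} --- finite presentation and affine diagonal (via \cite[Lemmas~0DPW and~0DPX]{Stacks}), linearly reductive ($\Gm$) stabilizers, and $\Theta$-reductivity, the latter obtained by identifying $\Qc_K$ with the moduli functor of objects in an abelian category from \cite[\S7]{AHHL} rather than by re-proving Harder--Narasimhan/Quot-properness statements as you suggest; the paper also checks ``unpunctured inertia'' in place of your S-completeness/Langton step, and it does not separately address the non-ampleness of the quasi-polarization inside this lemma (that issue is dealt with in Lemmas~\ref{constructibility} and~\ref{schemeness} via the wall perturbation you describe, so your extra care here is reasonable but not part of the paper's proof of this particular statement). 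Your first route --- computing that the relative inertia is exactly $\Gm$ by cohomology and base change for families of simple sheaves, rigidifying to get an algebraic space $M$, and observing that a gerbe banded by a linearly reductive group is a good moduli space morphism --- is a different and more elementary argument that avoids the valuative criteria of \cite{AHHL} entirely; it exploits the standing hypothesis that stability equals semistability more directly, and it suffices for everything the paper needs downstream, since universality for maps to algebraic spaces (Theorem~\ref{Thrm:univgms}) holds for any good moduli space. What the \cite{AHHL} route buys in exchange for its heavier machinery is uniformity: it is the argument that would survive if strictly semistable sheaves were allowed, whereas the gerbe argument collapses as soon as some stabilizer is larger than $\Gm$.
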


\begin{proof}
We want to apply the criterion for existence of good moduli spaces (Theorem A in \cite{AHHL}), and so we check that the conditions in the criterion are verified.

By \cite[Example 7.1]{AHHL}, this stack $\Qc_K$ coincides with the moduli functor given by \cite[Definition~7.8]{AHHL}. Therefore by  \cite[Lemma~7.16]{AHHL}, this stack is $\Theta$-reductive (\emph{cf.} Definition~3.10 in \cite{AHHL}). The stabilizer groups of the stack $\Qc$ are all $\mathbb{G}_m$ by stability of sheaves, hence connected and reductive. So $\Qc$ is locally linearly reductive (\emph{cf.} Definition~2.1 in \cite{AHHL}), and by \cite[Proposition~3.56]{AHHL} it has unpunctured inertia (\emph{cf.} Definition~3.53 in \cite{AHHL}). By \cite[Lemmas~0DPW and~0DPX]{Stacks}, the stack $\Qc_K$ is of finite presentation and with affine diagonal.

So we can apply Theorem A of \cite{AHHL} to conclude that $\Qc_K$ admits a good moduli space $\nu_K : \Qc_K \to M_K$, and the morphism $\nu_K$ is universal for maps to an algebraic space by Theorem \ref{Thrm:univgms} (\cite[Theorem~6.6]{Alp}).
\end{proof}

We now want to show that the good moduli spaces $M_K \to K$ ``glue'' to a relative good moduli space $\Mcal \to \Kc$, that is there exists a good moduli space morphism $\nu : \Qc \to \Mcal$ such that $\Mcal \to \Kc$ is a relative algebraic space.

\begin{Thrm}
\label{Thrm:good_mm}
    There exists a relative good moduli space $\Mcal \to \Kc$ such that $\Mcal$ is an algebraic stack; for each scheme $K \to \Kc$, the pullback $\Mcal_K$ is isomorphic to $M_K$; and there exists a morphism $\nu : \Qc \to \Mcal$ which is good.
\end{Thrm}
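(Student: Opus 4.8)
The plan is to construct $\Mcal$ by descending the locally defined good moduli spaces of Lemma~\ref{Lem:good_moduli} along a smooth atlas of $\Kc$, using the universal property of Theorem~\ref{Thrm:univgms} to manufacture the gluing data. First I would choose a smooth surjection $U \to \Kc$ from a scheme; since $\Kc$ is locally Noetherian (as for the moduli of quasi-polarized K3 surfaces), I may take $U$ Noetherian, and I set $R \deq U \fprod{\Kc} U$ with its two projections $p_1, p_2 : R \to U$, together with the triple product $U \fprod{\Kc} U \fprod{\Kc} U$. Applying Lemma~\ref{Lem:good_moduli} to $U$ produces a good moduli space $\nu_U : \Qc_U \to M_U$, where $\Qc_U$ is the pullback of $\Qc$; the algebraic space $M_U$ is the object I intend to descend.

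The heart of the argument is the gluing. Because $U \to \Kc$ is smooth, the projections $p_i$ are flat, so by Alper's flat base change for good moduli spaces \cite[Lemma~4.7]{Alp} each $p_i^* M_U \deq M_U \fprod{U,\,p_i} R$ is a good moduli space of the pullback $\Qc_U \fprod{U,\,p_i} R$. Since the two composites $R \to U \to \Kc$ are canonically $2$-isomorphic, these pullbacks are one and the same stack $\Qc_R \deq \Qc \fprod{\Kc} R$; thus $p_1^* M_U$ and $p_2^* M_U$ are both good moduli spaces of $\Qc_R$. By the universality of Theorem~\ref{Thrm:univgms}, the good moduli space morphism $\Qc_R \to p_2^* M_U$ factors uniquely through $p_1^* M_U$ and conversely, and the two factorizations are mutually inverse; this yields a canonical isomorphism $\phi : p_1^* M_U \xrightarrow{\sim} p_2^* M_U$ compatible with the morphisms from $\Qc_R$. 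The same uniqueness, applied over the triple product, shows that $\phi$ satisfies the cocycle condition $p_{13}^*\phi = p_{23}^*\phi \circ p_{12}^*\phi$ with no further computation, precisely because all three comparison maps are forced by the universal property.

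With the descent datum $(M_U, \phi)$ in hand, fppf descent for relative algebraic spaces (a smooth surjection being fppf) produces an algebraic space $\Mcal \to \Kc$ with $\Mcal \fprod{\Kc} U \cong M_U$; as $\Kc$ is algebraic, $\Mcal$ is an algebraic stack. The morphisms $\nu_U$ agree over $R$ via $\phi$ by construction, so they descend to a morphism $\nu : \Qc \to \Mcal$. To see that $\nu$ is good, I would observe that both defining conditions — exactness of $\Rab \nu_*$ and the isomorphism $\Oc_\Mcal \to \Rab \nu_* \Oc_\Qc$ — may be checked after the smooth, hence flat, base change $U \to \Kc$, over which $\nu$ restricts to the good moduli space morphism $\nu_U$; goodness is therefore smooth-local on the base.

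Finally, for the pullback statement $\Mcal_K \cong M_K$ for an arbitrary scheme $K \to \Kc$, I would use the universality of $M_K$ (Theorem~\ref{Thrm:univgms}) to obtain a canonical morphism $M_K \to \Mcal_K$ and check that it is an isomorphism. I expect this to be \textbf{the main obstacle}, since good moduli spaces are \emph{not} stable under arbitrary (non-flat) base change, so one cannot simply pull the identity back along $K \to \Kc$. Here the stable hypothesis is decisive: every stabilizer of $\Qc$ is $\Gm$, so in the stable case $\nu$ is a $\Gm$-gerbe, whose formation commutes with arbitrary base change; hence $\nu_K : \Qc_K \to \Mcal_K$ is again a good moduli space morphism onto an algebraic space, and uniqueness identifies $\Mcal_K$ with $M_K$.
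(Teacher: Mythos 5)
Your proposal is correct and follows essentially the same route as the paper: construct the good moduli space $M_K$ over a Noetherian smooth atlas via Lemma~\ref{Lem:good_moduli}, glue over the presentation $K' \rightrightarrows K$ of $\Kc$ using base change together with the universality of Theorem~\ref{Thrm:univgms}, and verify goodness of $\nu$ smooth-locally by flat base change; the only packaging difference is that the paper realizes the gluing as the quotient stack $[M_K/P]$ with $P = K \fprod{\Kc} M_K$ rather than as an effective fppf descent datum, and your version has the small merit of making explicit that the gluing isomorphism and its cocycle condition are forced by the universal property. One correction to your final paragraph: the premise that good moduli spaces fail to be stable under arbitrary (non-flat) base change is not the relevant obstruction here --- being a good (moduli space) morphism \emph{is} preserved under arbitrary base change (Alper, Proposition~4.7(i) with Remark~4.4, as invoked in Remark~\ref{Rem:pointwise}), so $\Qc_K \to \Mcal_K$ is automatically a good moduli space and uniqueness identifies $\Mcal_K$ with $M_K$; your $\Gm$-gerbe detour is valid in the stable case but unnecessary.
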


\begin{proof}
Since the moduli stack of quasi-polarized K3 surfaces $\Kc$ is an Artin stack, we can choose a smooth surjection $K \to \Kc$ from a scheme $K$. This morphism is representable by algebraic spaces, so
the fibered product $K'\deq K \times_{\Kc} K$ is an algebraic space; and the projection morphisms $k_1, k_2: K' \rightrightarrows K$ are still smooth, being pullbacks of a smooth morphism.
The spaces $K$ and $K'$ naturally assemble into a smooth groupoid of algebraic spaces \cite[Lemma~04T4]{Stacks}, and the quotient groupoid is isomorphic to the original stack $\Kc \cong [K/K']$ \cite[Lemma~04T5]{Stacks}, so we have obtained a groupoid presentation of $\Kc$.

By Lemma \ref{Lem:good_moduli}, there exists a good moduli space $\Qc_K \to M_K$. Since good moduli spaces are universal for morphisms to algebraic spaces (Theorem \ref{Thrm:univgms}, \cite[Theorem~6.6]{Alp}), we also obtain the unique canonical morphism $u: M_K \to K$.

We now want to produce an algebraic space $P$ so that $P\rightrightarrows M_K$ becomes a smooth groupoid which would then yield a quotient stack. To that end, study the pullback
$$ P \deq K' \fprod{k_i,K} M_K =
(K \fprod{\Kc} K) \fprod{k_i,K} M_K = 
K \fprod{\Kc} M_K
.$$
The object $P$ does not depend (up to isomorphism) on the projection $k_i$ we choose, but the two projections induce two smooth morphisms $p_1, p_2: P\rightrightarrows M_K$, where $p_i = k_i \times 1_{M_K}$. Further, the rest of the structure maps for $P\rightrightarrows M_K$ --- composition, identity, inverse as in \cite[\S 0230]{Stacks} --- are obtained from the groupoid $K' \rightrightarrows K$ by pullback and yield the structure of a groupoid in algebraic spaces for $P \rightrightarrows M_K$ \cite[044B]{Stacks}.



Now, it is known that the quotient stack of a smooth groupoid is algebraic \cite[Theorem~04TK]{Stacks}, so we put $\Mcal \deq [M_K/P]$ to get the relative good moduli space. Since we had a morphism of groupoids
$$\Big[P \rightrightarrows M_K\Big] \to 
\Big[ K' \rightrightarrows K \Big],$$
we also obtain a morphism of the quotient stacks $\Mcal \to \Kc$ \cite[Lemma~046Q]{Stacks}.

To argue that we have a canonical morphism $\nu: \Qc \to \Mcal$, we will construct a morphism from a groupoid associated to $\Qc$ to the groupoid $P \rightrightarrows M_K$. Pick a smooth cover by a scheme $Q \to \Qc_K$ -- it induces a smooth cover $Q \to \Qc$. Denote by $v :Q \to M_K$ the composition of the cover with $\Qc_K \to M_K$.
Put $Q' = Q \times_\Qc Q$, then we get a groupoid presentation $q_1,q_2 : Q' \rightrightarrows Q$ of $\Qc$.  Let us summarize the notation in the diagram:
\begin{diagram}
Q' & 
    \pile{\rTo^{q_1,q_2} \\ \rTo} &
    Q & \rTo & \Qc_K & \rTo^q & \Qc \\
\dDashto && \dTo_v &
    \ldTo^g &&& \dDashto_\nu \\
P & 
    \pile{\rTo^{p_1,p_2} \\ \rTo} &
    M_K && \rTo^p && \Mcal \\
\dTo && \dTo_u && && \dTo \\
K' & 
    \pile{\rTo^{k_1,k_2} \\ \rTo} &
    K && \rTo && \Kc \\
\end{diagram}
Since $K' = K\times_\Kc K$, the two morphisms $uvq_i : Q' \rightrightarrows K$ define a canonical morphism $w: Q' \to K'$. Then the pair of morphisms $(w,vq_i)$ for any $i=1,2$ define a canonical morphism to the fibered product $Q' \to K' \times_K M_K = P$, and we then have a morphism of groupoids
$$\Big[ Q' \rightrightarrows Q \Big] \to 
\Big[ P \rightrightarrows M_K \Big]$$
which induces a morphism of the quotient stacks $\nu : \Qc \to \Mcal$. 

We can now check that $\nu$ is good. First, let us study $\Rab \nu_* \Oc_\Qc$. By descent, it is isomorphic to $\Oc_\Mcal$ if and only if its pullback $\Lab p^* \Rab \nu_* \Oc_\Mcal$ is isomorphic to $\Oc_{M_K}$. But $p$ is smooth, hence flat, so by base change \cite[Corollary~1.4.(2)]{Hall:basechange}, and using that $g$ is a good moduli space,
we have:
$$ \Lab p^* \Rab \nu_* \Oc_\Mcal =
\Rab q_* \Lab q^* \Oc_\Mcal \cong \Oc_{M_K}
.$$
Using base change again, we can check that $\nu_*$ is exact, so $\nu$ is good.
\end{proof}

\begin{Rem}
\label{Rem:pointwise}
    The property of being a good moduli space is preserved under arbitrary base change \cite{Alp}, therefore, for a closed point $[X] \in \Kc$, the spaces $\Mcal_{[X]}$ and $M_{[X]}$ are isomorphic, so $\Mcal_{[X]}$ is a good moduli space of the stack of stable sheaves over the surface $X$.
\end{Rem}

\subsection{The good morphism is fiberwise a scheme}

We will briefly summarize several results about change of polarization from the book by Huybrechts and Lehn \cite[\S4.3]{HuL}. Then we will apply these results to our situation to show that for a closed point $[X] \in \Kc$, the fiber $\Mcal_{[X]}$ is a scheme. 

\begin{Fact}[\emph{cf.} {\cite[Lemma~4.C.2 and Theorem~4.C.3]{HuL}}]
\label{hyparr}
    Let $X$ be a smooth projective surface over an algebraically closed field of characteristic zero.
    For a fixed Chern character $v$ on $X$, there is a locally finite hyperplane arrangement (the hyperplanes are called \emph{walls}) in the numerical group $\NumR X$ satisfying the following property:
    if a big and nef divisor $H \in \NumR X$ is not on a wall
    and $\rk v$ is coprime with $\cn_1 (v)\cdot H$, then a torsion-free sheaf of Chern character $v$ is $H$-stable iff it is $H$-semistable.
\end{Fact}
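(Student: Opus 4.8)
The plan is to read the walls off the numerical condition that a subsheaf share the slope of the ambient sheaf, and then to control them with the Bogomolov inequality together with the Hodge index theorem. Set $r \deq \rk v$. If a torsion-free sheaf $E$ with $\ch E = v$ is $H$-semistable but not $H$-stable, there is a proper subsheaf $F \subset E$ of rank $r'$ with $0 < r' < r$ and $\mu_H(F) = \mu_H(E)$. Putting $\xi_F \deq r\,\cn_1 F - r'\,\cn_1 v \in \Num X$, this equality of slopes is exactly the linear condition $\xi_F \cdot H = 0$. So the natural candidate walls are the hyperplanes $W^\xi \deq \{\, x \in \NumR X \mid \xi \cdot x = 0 \,\}$ attached to the classes $\xi$ that can arise in this way, and the whole Fact amounts to showing that these are locally finite and that, off of them, the coprimality hypothesis forbids strictly semistable sheaves.

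The first step is to bound $\xi_F^2$. Writing $Q \deq E/F$ and $\Delta(E) \deq (\cn_1 E)^2 - 2\,\rk(E)\,\chn_2 E$, the standard discriminant identity for $0 \to F \to E \to Q \to 0$ on a surface gives
\[ \xi_F^2 = r\,\rk(Q)\,\Delta(F) + r\,r'\,\Delta(Q) - r'\,\rk(Q)\,\Delta(v). \]
Since $\mu_H(F) = \mu_H(E)$ forces $\mu_H(Q) = \mu_H(E)$ as well, semistability of $E$ descends to $H$-semistability of $F$ and $Q$, so the Bogomolov inequality yields $\Delta(F), \Delta(Q) \ge 0$, whence
\[ \xi_F^2 \ge -\,r'\,\rk(Q)\,\Delta(v) \ge -\tfrac{r^2}{4}\,\Delta(v). \]
(If $\Delta(v) < 0$ there are no $H$-semistable torsion-free sheaves of class $v$ at all, so the statement is vacuous and I take the empty arrangement.) I therefore define the arrangement
\[ \mathcal W \deq \bigl\{\, W^\xi \ \bigm|\ \xi \in \Num X \setminus \{0\}, \ -\tfrac{r^2}{4}\Delta(v) \le \xi^2 < 0 \,\bigr\}, \]
which contains every $W^{\xi_F}$ coming from a genuine destabilizing configuration.

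The main obstacle is proving that $\mathcal W$ is locally finite, and this is where the Hodge index theorem enters. Fix a compact subset $K$ of the big-and-nef cone; every $H \in K$ has $H^2 > 0$, so the intersection form is negative definite on $H^\perp \subset \NumR X$. A compactness argument over $K$ then produces a single constant $\delta > 0$ with $\xi^2 \le -\delta\,|\xi|^2$ for every $H \in K$ and every $\xi \in H^\perp$, where $|\cdot|$ is any fixed norm on the finite-dimensional space $\NumR X$. Now $W^\xi$ meets $K$ only if $\xi \cdot H = 0$ for some $H \in K$, in which case the lower bound $\xi^2 \ge -\tfrac{r^2}{4}\Delta(v)$ forces $|\xi|^2 \le \tfrac{r^2}{4\delta}\Delta(v)$; since $\xi$ ranges over the lattice $\Num X$, only finitely many such $\xi$ occur. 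Hence $K$ meets finitely many walls, which is the asserted local finiteness.

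Finally I would verify the stability statement. Let $H$ be big and nef, on no wall of $\mathcal W$, with $\gcd\bigl(r,\ \cn_1(v)\cdot H\bigr) = 1$, and let $E$ be torsion-free with $\ch E = v$ and $H$-semistable; suppose for contradiction it is not $H$-stable, producing $F$ and $\xi_F$ as above. If $\xi_F \ne 0$ in $\Num X$, then by the bound above $W^{\xi_F} \in \mathcal W$ and $\xi_F \cdot H = 0$, so $H$ lies on a wall, contradicting its choice. If $\xi_F = 0$, then $r\,\cn_1 F = r'\,\cn_1 v$ numerically; pairing with $H$ gives $r \mid r'\,(\cn_1(v)\cdot H)$, and coprimality forces $r \mid r'$, contradicting $0 < r' < r$. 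Either way we reach a contradiction, so $E$ is $H$-stable; the converse implication is immediate.
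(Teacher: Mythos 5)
The paper does not actually prove this statement: it records it as a Fact with a ``cf.'' citation to \cite[\S4.C]{HuL}, and your argument is essentially a reconstruction of the proof given there --- walls cut out by the classes $\xi_F = r\,\cn_1 F - r'\,\cn_1 v$ of potential destabilizers, the discriminant identity plus Bogomolov to bound $\xi_F^2$ from below, and the Hodge index theorem for local finiteness on compacta where $H^2>0$. The skeleton is right, and your final coprimality step is correct; in fact, since $H$ is an integral class, pairing $\xi_F$ with $H$ gives $r \mid r'\,(\cn_1(v)\cdot H)$ directly from $\mu_H(F)=\mu_H(E)$, whether or not $\xi_F=0$, so for \emph{slope} stability the coprimality hypothesis alone already excludes strictly semistable sheaves. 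The genuine content of the Fact --- the part the paper actually uses in Lemma~\ref{schemeness} --- is the locally finite wall structure, which your Hodge-index argument does establish (and note it is local finiteness only on the positive cone $\{x^2>0\}$: every wall $\xi^\perp$ passes through the origin, so the Fact's phrase ``in $\NumR X$'' cannot be taken literally; your compact-subset formulation is the usable one).

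The one step you should not treat as free is the Bogomolov inequality for $F$ and $Q$. These sheaves are slope-semistable with respect to $H$, which is only big and nef, whereas the standard reference \cite[Theorem~3.4.1]{HuL} proves the inequality for ample polarizations; and $\mu_H$-semistability for nef $H$ does \emph{not} imply $\mu_{H'}$-semistability for nearby ample $H'$, so you cannot simply perturb. The inequality does hold for nef $H$ with $H^2>0$ in characteristic zero (it follows from Langer's extensions of Bogomolov's inequality, or from a limiting argument: take the Harder--Narasimhan filtration with respect to $H+tA$ for ample $A$ and small $t>0$, use boundedness of the maximal destabilizers to see that the HN factors acquire $H$-slope equal to $\mu_H(E)$ in the limit, then apply the discriminant identity, ordinary Bogomolov for the ample class $H+tA$, and Hodge index). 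Since this is precisely the point at which the Fact goes beyond the literal statements in \cite{HuL}, it needs a sentence or a citation rather than an appeal to ``the Bogomolov inequality.'' Two smaller repairs: replace $F$ by its saturation (which has the same $H$-slope by semistability of $E$) so that $Q$ is torsion-free before invoking Bogomolov; and note explicitly that $\xi_F^2<0$, which is needed for $W^{\xi_F}\in\mathcal{W}$, follows from the Hodge index theorem because $\xi_F\in H^\perp$, $\xi_F\neq 0$ and $H^2>0$.
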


\begin{Lem}
\label{schemeness}
    Fix a Chern character $v$ over $\Xc$.
    Assume that semistable sheaves of class $v$ are stable.
    Then for any quasi-polarized surface $[X,H] \in \Kc$, the restriction $\Mcal_{[X]}$ is a scheme.
\end{Lem}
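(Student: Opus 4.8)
The plan is to reduce to the classical case of an ample polarization by perturbing $H$. First, by Remark~\ref{Rem:pointwise}, for the closed point $[X,H]\in\Kc$ the fiber $\Mcal_{[X]}$ is isomorphic to the good moduli space $M_{[X]}$ of the stack $\Qc_{[X]}$ of $H$-stable sheaves of class $v$ on $X$. So it suffices to exhibit $M_{[X]}$ as a scheme. The only obstruction to citing the classical Gieseker--Maruyama construction directly is that $H$ is merely big and nef; the idea is to move $H$ slightly into the ample cone without changing which class-$v$ sheaves are (semi)stable.

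Concretely, since $X$ is projective I would choose an ample line bundle $L$ on $X$. As $H$ is big and nef, $H+\epsilon L$ is ample for every $\epsilon\in\R^{+}$. I then invoke the local finiteness of the wall arrangement in $\NumR X$ attached to the class $v$ (Fact~\ref{hyparr}): only finitely many walls meet a bounded neighborhood of $H$, and along the segment $\{H+tL : 0<t\le\epsilon\}$ one crosses no wall once $\epsilon$ is small enough (and rational). This is the same computation carried out in Lemma~\ref{constructibility}, now in the simpler single-surface setting: for a subsheaf $E$ of a class-$v$ sheaf one has $\mu_{H+\epsilon L}(E)=\mu_H(E)+\epsilon\,(\cn_1(E)\cdot L)/\rk E$, and by boundedness only finitely many numerical types of potentially destabilizing $E$ occur, so a single small $\epsilon$ makes every strict inequality $\mu_H(E)<\mu_H(F)$ persist after the twist. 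Hence, writing $H'\deq H+\epsilon L$, slope stability (and semistability) with respect to $H'$ coincides with that with respect to $H$ for sheaves of class $v$.

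Combining this with the hypothesis that $H$-semistability equals $H$-stability for class $v$, all notions collapse: for the ample polarization $H'$ one gets slope-stable $=$ Gieseker-stable $=$ Gieseker-semistable $=$ slope-semistable on class $v$, and moreover these agree with $H$-stability. Classical theory \cite[\S4]{HuL} then produces a projective moduli scheme of $H'$-semistable sheaves of class $v$, which is precisely the good moduli space of the stack of $H'$-semistable sheaves. Since that stack is exactly $\Qc_{[X]}$ and good moduli spaces are unique up to canonical isomorphism, $M_{[X]}$ is this projective scheme, and therefore $\Mcal_{[X]}$ is a scheme.

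The main obstacle is the middle step: guaranteeing that a single $\epsilon$ works uniformly, \emph{i.e.} that passing from $H$ to the ample class $H'$ neither destabilizes an $H$-stable sheaf nor stabilizes a non-stable one. This rests on the local finiteness of walls together with boundedness of the family of subsheaves that can appear as destabilizers, exactly as in the proof of Lemma~\ref{constructibility}; the novelty here is only that we are free of the base-change bookkeeping, so the argument is a direct specialization.
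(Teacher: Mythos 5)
Your proposal is correct and follows essentially the same route as the paper: reduce to the good moduli space $M_{[X]}$ via Remark~\ref{Rem:pointwise}, perturb $H$ to an ample class $H+\epsilon L$ chosen so that the segment joining them meets no wall of the arrangement in Fact~\ref{hyparr}, check that stability in class $v$ is unchanged, and conclude from the classical ample-polarization theory together with uniqueness of good moduli spaces. The only cosmetic difference is in how the invariance of stability is justified: the paper runs an intermediate-value argument on $\delta\cdot(H+tL)$ for a would-be destabilizer $\delta$, producing a forbidden equal-slope point on the wall-free segment, whereas you appeal to finiteness of the numerical types of destabilizers --- both arguments rest on the same local finiteness of walls.
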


\begin{proof}
    This is well-known in the case when the quasi-polarization is ample and follows from Remark \ref{Rem:pointwise} and the assumption that semistability is equivalent to stability. So we will reduce the general case $[X,H]$ with $H$ big and nef to the ample case by considering a small ample shift.
    
    For a big and nef $H$ (which may lie on a wall -- it wouldn't pose problems), we can find an ample divisor $H_1 \in \NumR X$ such that the semiopen line segment $(H,H_1]$ does not intersect any walls -- this follows from the fact that the hyperplane arrangement is locally finite (Fact \ref{hyparr}).
    From the assumption that $\cn_1(v)$ is indivisible and the same Fact \ref{hyparr} it also follows that stability with respect to any $H_\epsilon \in (H,H_1]$ is equivalent to semistability, and in addition,
    we assumed equivalence of $H$-stability and $H$-semistability.
    We now want to prove that in this setup, a sheaf $F$ is $H$-stable iff it is $H_1$-stable.
    
    Assume that it is $H$-stable, but not $H_1$-stable. Fix an $H_1$-destabilizing subsheaf $F_1 \subset F$ and let us define
    $\delta \deq
    \frac{\cn_1(F_1)}{\rk F_1} - 
    \frac{\cn_1(F)}{\rk F}$. 
    Note that pairing with $\delta$ is a linear function on $\NumR X$ and $H\cdot\delta < 0$ from $H$-stability of $F$, while $H_1\cdot\delta > 0$ from $H_1$-instability.
    Hence there exists some $H_\epsilon \in (H,H_1)$ such that $H_\epsilon\cdot\delta = 0$ proving that $F$ is strictly semistable with respect to $H_\epsilon$ and with destabilizig subsheaf $F_1$. But this contradicts our setup where stability is equivalent to semistability.
    
    The proof that $H_1$-stability implies $H$-stability is analogous.
\end{proof}

\begin{Rem}
    It is interesting to note that under the assumptions of the above lemma, the resulting moduli space with respect to quasi-polarization does not depend on the small ample shift, even if the two ample shifts are separated by a wall. The latter may happen when $H$ happens to be on a wall.
\end{Rem}

\subsection{Proof of the main theorem}

Now we can combine the above results and prove the following theorem.

\begin{Thrm}
\label{Thrm:construct_Mc}
    Let $\Kc$ be a stack of quasi-polarized  surfaces that admits the universal family $\Xc$ with the universal quasi-polarization $\Hc$.
    Fix a Chern character $v$ over $\Xc$.
    Assume that,
    pointwise over $\Kc$,
    stability is equivalent to semistability for sheaves in class $v$.
    Then there exists a stack $\Mcal \to \Kc$ which is fiberwise
    $($i.e. over each closed point of $\Kc)$
    the moduli scheme of stable sheaves of class $v$ with respect to the restriction of the universal quasi-polarization.
\end{Thrm}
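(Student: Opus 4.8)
The plan is to assemble the results established in this section, the only genuinely new input being the fiberwise identification of $\Mcal$ with a classical moduli scheme. First I would show that $\Qc$ is algebraic by establishing that $\Qc \subset \widetilde \Mcal$ is an open substack. The ambient stack $\widetilde \Mcal$ of all flat families of sheaves of class $v$ is an Artin stack by Lieblich's theorem. Since openness is smooth-local, it suffices to check it after pulling back along a schematic chart, and by the reduction carried out inside Lemma~\ref{constructibility} we may take this chart to be Noetherian. On such a chart the locus cut out by $\Qc$ is constructible by Lemma~\ref{constructibility} and stable under generization by Lemma~\ref{generization}, hence open by Corollary~\ref{Lem:openness}. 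Therefore $\Qc$ is open in $\widetilde \Mcal$ and is itself an Artin stack.

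Next I would invoke Theorem~\ref{Thrm:good_mm}, which produces an algebraic stack $\Mcal \to \Kc$ together with a good morphism $\nu : \Qc \to \Mcal$ whose pullback $\Mcal_K$ over every schematic chart $K \to \Kc$ is the good moduli space $M_K$ of $\Qc_K$. This is exactly the relative good moduli space required by the statement, so the only remaining task is to describe its fibers over closed points.

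Finally I would perform the fiberwise identification. Fix a closed point $[X,H] \in \Kc$. Because being a good moduli space is stable under base change, Remark~\ref{Rem:pointwise} identifies $\Mcal_{[X]}$ with $M_{[X]}$, the good moduli space of the stack of $H$-stable sheaves of class $v$ on the single surface $X$; and Lemma~\ref{schemeness} shows this space is a scheme. It then remains to match it with the classical moduli scheme of stable sheaves on $X$. When $H$ is ample this is well-known: under our hypothesis stability coincides with semistability, every stable sheaf has automorphism group exactly $\Gm$, and the good moduli space of the stack of such sheaves is the classical projective moduli scheme of \cite{HuL}. For a general big and nef $H$ I would reduce to the ample case precisely as in Lemma~\ref{schemeness}, replacing $H$ by a small ample shift $H_1$ with the segment $(H,H_1]$ avoiding every wall of Fact~\ref{hyparr}, so that $H$-stable and $H_1$-stable sheaves coincide and therefore determine the same moduli scheme.

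The step I expect to be the main obstacle is this last identification. The constructions of the paper yield $\Mcal_{[X]}$ only as an abstract good moduli space, whereas the statement asserts it is \emph{the} moduli scheme of stable sheaves; closing this gap requires knowing that over a fixed surface a good moduli space of stable sheaves recovers the classical construction. The genuinely new point is the passage from big and nef to ample polarizations, which is exactly what the wall-crossing argument of Lemma~\ref{schemeness} resolves --- once the polarization is made ample by a small shift, coincidence with the classical scheme is standard.
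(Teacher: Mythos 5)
Your proposal is correct and follows essentially the same route as the paper: openness of $\Qc\subset\widetilde\Mcal$ via Lemmas~\ref{constructibility} and~\ref{generization} together with Corollary~\ref{Lem:openness}, then Theorem~\ref{Thrm:good_mm} for the relative good moduli space, and finally Remark~\ref{Rem:pointwise} with Lemma~\ref{schemeness} for the fiberwise identification. The extra care you devote to matching the good moduli space over a closed point with the classical moduli scheme is exactly what the paper delegates to Lemma~\ref{schemeness} and the ample-case literature, so there is no substantive difference.
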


\begin{proof}
We have proved in Lemmas \ref{constructibility} and \ref{generization} that $\Qc$ is constructible in $\widetilde \Mcal$ and preserved by generization.
Therefore, by Lemma \ref{Lem:openness}, the subfunctor $\Qc \subset \widetilde \Mcal$ is open, and since $\widetilde \Mcal$ is an Artin stack, then $\Qc$ is also an Artin stack.
By Theorem \ref{Thrm:good_mm}, there exists a good moduli space morphism $\nu : \Qc \to \Mcal$ such that fiberwise we get good moduli spaces. By Lemma \ref{schemeness}, the family $\Mcal \to \Kc$ is fiberwise a scheme.
\end{proof}
    
\section{The Strange Duality morphism}

\label{Sec:SD_mm}

\subsection{Defining theta line bundles}

Let $\Kc$ be the moduli stack of quasi-polarized K3 surfaces. Let $\Xc \to \Kc$ be the universal quasi-polarized K3 surface, so it will also be the moduli stack of pointed quasi-polarized K3 surfaces. Then $\Yc \deq \Xc \times_\Kc \Xc$ is the universal pointed quasi-polarized K3 surface.

For a fixed Chern character $v$, let $\Mcal_v \to \Kc$ be the stack of stable sheaves with Chern character $v$ which is pointwise a scheme, as constructed in Theorem \ref{Thrm:good_mm} and Theorem \ref{Thrm:construct_Mc}. So $\Mcal_v$ is a stack, but over each point of $\Kc$, the fiber is a scheme -- the moduli scheme of stable sheaves with respect to the quasi-polarization at this point of the moduli space.

Consider $\Nc_v \deq \Mcal_v \times_\Kc \Xc$ -- it will be the relative moduli space over the stack of pointed K3 surfaces. Unfortunately, there is no universal family over $\Nc_v$, so we need to work with the stack $\Qc \to \Nc_v$, which is the moduli stack of stable sheaves before we ``forget'' the $\mathbb{G}_m$-automorphisms of the sheaves. We can construct it analogously to Theorem \ref{Thrm:good_mm} or pull back the $\Qc$ from $\Qc \to \Kc$ along $\Xc \to \Kc$.
Then we have the universal family $E \in \Coh \left( \Yc \fprod{\Kc} \Yc \fprod{\Xc} \Qc \right)$ of stable sheaves.

Consider the following Cartesian square. We will use it to define a line bundle on $\Qc$ and, with Lemma~\ref{Lem:good_descent}, argue that it descends to $\Nc_v$, so that we can later use this universal theta line bundle to construct the Strange Duality morphism in families. 
\begin{diagram}
\Yc && \lTo^q    && \Yc \fprod{\Xc} \Qc
    \\
\dTo &&&& \dTo_p
    \\
\Xc & \lTo  & \Nc_v & \lTo  & \Qc
    \\
\end{diagram}

Taking an algebraic K-theory class $w$ on $\Yc$, we can use Fourier-Mukai transform and define uniquely up to an isomorphism a line bundle 
$$L \deq \det p_* (E \otimes q^* w)$$
on $\Qc$. Further, assuming that $w$ is orthogonal to $v$, we can argue that this line bundle $L$ descends along $\Qc \to \Nc_v$, as described in Lemma \ref{Lem:theta_on_ptd}. We will need the following preliminary result.

\begin{Lem}
\label{Lem:twisted_descent_datum}
Let $B$ be a locally Noetherian scheme and $\pi: E \to B$ be a $\Gm$-bundle over $B$, i.e. there is a line bundle $\Lc$ on $B$ such that $E = \relSpec_B \big( \bigoplus_{n \in \Z} \Lc^{\otimes n} \big)$. Let $F$ and $G$ be two indecomposable complexes of coherent sheaves on $B$ and assume that $\pi^* F \cong \pi^* G$. Then there exists $k\in \Z$ such that $F \cong G \otimes \Lc^k$.
\end{Lem}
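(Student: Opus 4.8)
The plan is to descend the isomorphism $\pi^* F \cong \pi^* G$ back to $B$ by pushing forward along $\pi$ and exploiting the natural $\Z$-grading (equivalently, the $\Gm$-action on $E$) that the pushforward carries.

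First I would record the basic structure. Since $\pi$ is a $\Gm$-bundle it is affine and flat, and $\pi_* \Oc_E \cong \Ac \deq \bigoplus_{n \in \Z} \Lc^{\otimes n}$ as a sheaf of $\Z$-graded $\Oc_B$-algebras. Pullback $\pi^*$ is exact (flatness) and $\pi_*$ is exact (affineness), so the projection formula applies and gives
\[
\Hom{E}{\pi^* F, \pi^* G} \cong \Hom{B}{F, \pi_* \pi^* G} \cong \Hom{B}{F, \bigoplus_{n \in \Z} G \otimes \Lc^{\otimes n}} \cong \bigoplus_{n \in \Z} \Hom{B}{F, G \otimes \Lc^{\otimes n}},
\]
the last step using that $F$ is compact (it is perfect in our application, where $F$ is a line bundle), so that maps out of it commute with the infinite direct sum. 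This $\Z$-grading is exactly the weight decomposition for the $\Gm$-action induced by the canonical equivariant structures that $\pi^* F$ and $\pi^* G$ acquire as pullbacks along the invariant map $\pi$; a homogeneous element of weight $n$ is precisely a morphism $F \to G \otimes \Lc^{\otimes n}$ on $B$.

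Next I would decompose the given isomorphism and its inverse into weight components. Writing $\phi : \pi^* F \to \pi^* G$ for the isomorphism and $\psi = \phi^{-1}$, the identification above yields $\phi = \sum_n \phi_n$ and $\psi = \sum_n \psi_n$ with $\phi_n : F \to G \otimes \Lc^{\otimes n}$ and $\psi_n : G \to F \otimes \Lc^{\otimes n}$, each a \emph{finite} sum by the direct-sum statement. Extracting the weight-zero part of the relation $\psi \circ \phi = \mathrm{id}_{\pi^* F}$ then gives
\[
\sum_{n \in \Z} \psi_{-n} \circ \phi_n = \mathrm{id}_F \in \mathrm{End}_B(F),
\]
a finite sum, where $\psi_{-n} \circ \phi_n$ denotes the evident composite $F \to G \otimes \Lc^{\otimes n} \to F$ landing in $\mathrm{End}_B(F)$.

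Finally I would invoke indecomposability. Since $F$ is indecomposable, $\mathrm{End}_B(F)$ is local (the Krull--Schmidt input), so its non-invertible endomorphisms form a two-sided ideal. As $\mathrm{id}_F$ is a unit, the displayed finite sum forces some summand $\psi_{-k} \circ \phi_k$ to be an isomorphism of $F$; hence $\phi_k : F \to G \otimes \Lc^{\otimes k}$ is a split monomorphism. But $G \otimes \Lc^{\otimes k}$ is again indecomposable (tensoring by a line bundle is an autoequivalence), so the retraction forces $\phi_k$ to be an isomorphism $F \cong G \otimes \Lc^{\otimes k}$, as desired. The hard part will be exactly this last step: justifying that indecomposability yields a local endomorphism ring, and confirming that the weight decomposition of $\phi$ is genuinely finite so that $\mathrm{id}_F$ is a finite sum of endomorphisms. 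Both are comfortable in the intended application, where $F$ and $G$ are determinant line bundles over a connected base, but stating them at the generality of the lemma is where care is needed.
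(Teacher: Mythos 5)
Your overall strategy is the same as the paper's: push the isomorphism down to $B$, decompose it into weight (graded) components $\phi_n : F \to G \otimes \Lc^{\otimes n}$, and use indecomposability to force all but one component to be negligible. Your version is in fact more precise than the paper's: the paper phrases this as viewing $F$ as a subobject of $\bigoplus_n \Lc^{\otimes n} \otimes G$ and claiming it splits as the direct sum of the ``intersections'' $F \cap \Lc^{\otimes n} \otimes G$ (a subobject of a direct sum does not in general decompose this way -- think of a diagonal), whereas your identity $\mathrm{id}_F = \sum_n \psi_{-n} \circ \phi_n$ extracted from the degree-zero part of $\psi \circ \phi = \mathrm{id}$ is the correct formal skeleton of the argument. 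So as a comparison of routes: same idea, with your bookkeeping via the adjunction $\Hom{E}{\pi^*F,\pi^*G} \cong \bigoplus_n \Hom{B}{F, G\otimes\Lc^{\otimes n}}$ being the cleaner implementation.

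The genuine gap is exactly where you suspect it, and it is worth being blunt that the step \emph{fails} at the stated generality rather than merely ``needing care'': indecomposability of a coherent sheaf on a locally Noetherian scheme does \emph{not} imply that $\mathrm{End}_B(F)$ is local. It only rules out nontrivial idempotents. For $B = \mathbb{A}^1$ and $F = \Oc_B$, the sheaf is indecomposable but $\mathrm{End}_B(F) = k[t]$, and $1 = t + (1-t)$ exhibits the identity as a sum of two non-units; so from $\mathrm{id}_F = \sum_n \psi_{-n}\circ\phi_n$ you cannot conclude that some summand is invertible. Locality of the endomorphism ring (Fitting/Krull--Schmidt) requires a finiteness hypothesis absent from the lemma -- e.g.\ $F$ of finite length, or $B$ proper over a field or a complete local ring. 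There is also the secondary point you flag: on a merely locally Noetherian, possibly non-quasi-compact $B$, the weight decomposition of $\phi$ need only be locally finite, so the displayed sum need not be globally finite. Both issues are genuine obstructions to the proof as written (and, to be fair, the paper's own proof silently crosses the same gap); they disappear in the intended application, where one works with summands of a determinant-type complex over an atlas and the relevant weight is computed directly from the $\Gm$-action, but the lemma as stated is not established by either argument without an additional Krull--Schmidt-type hypothesis.
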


\begin{proof}
Since coherent sheaves on a relative spectrum of a sheaf of algebras $\Ac = \bigoplus_{n \in \Z} \Lc^{\otimes n}$ correspond to quasi-coherent sheaves on the base $B$ that are finitely generated $\Ac$-modules, we can view the isomorphism
$\pi^* F \cong \pi^* G$ as an isomorphism of complexes of quasi-coherent sheaves on $B$:
$$ \bigoplus_{n \in \Z} \Lc^{\otimes n} \otimes F
\cong \bigoplus_{n \in \Z} \Lc^{\otimes n} \otimes G
.$$
Consider the direct summand $F = \Lc^{\otimes 0} \otimes F$ of the left hand side of the isomorphism.
$$  F \subset \bigoplus_{n \in \Z} \Lc^{\otimes n} \otimes F \xrightarrow{\cong} 
\bigoplus_{n \in \Z} \Lc^{\otimes n} \otimes G
.$$
Viewing $F$ as a subobject of the right hand side, we get a decomposition of $F$ into direct summands $F \cap \Lc^{\otimes n} \otimes G$; by assumption, a nontrivial decomposition cannot happen, so there is only one index $k$ for which $F \cap \Lc^{\otimes k} \otimes G \neq 0$, and therefore the morphism from $F$ factors through $\Lc^{\otimes k} \otimes G$. Using a similar argument for $\Lc^{\otimes k} \otimes G$, we can deduce that in fact $F$ is identified with $\Lc^{\otimes k} \otimes G$ by the isomorphism of pullbacks.
\end{proof}

\begin{Lem}
\label{Lem:theta_on_ptd}
    Take two orthogonal algebraic K-theory classes $v$ and $w$ on the universal pointed K3 surface $\Yc$. As before, $E \in \Coh \left( \Yc \fprod{\Xc} \Qc \right)$ is the universal family. Define $L \deq \det p_* (E \otimes q^* w)$ on $\Qc$. Then the line bundle $L$ descends to $\Nc_v$.
\end{Lem}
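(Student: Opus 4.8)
The plan is to produce a descent datum for $L$ along the good morphism $\nu : \Qc \to \Nc_v$ and then invoke the Descent Lemma (Lemma~\ref{Lem:good_descent}). First I would record that $\nu$ is indeed good: the morphism $\Qc \to \Mcal_v$ of Theorem~\ref{Thrm:good_mm} is good, and since $\Nc_v = \Mcal_v \fprod{\Kc} \Xc$, the morphism $\Qc \to \Nc_v$ is its base change along $\Xc \to \Kc$, hence again good. Moreover, because stable sheaves are simple, every stabilizer is $\Gm$ and $\nu$ is a $\Gm$-gerbe, the inertia acting on the universal family $E$ with weight one. By the Descent Lemma it then suffices to equip $L$ with an isomorphism $\iota : \nu_1^* L \to \nu_2^* L$ on $\Qc \fprod{\Nc_v} \Qc$ satisfying the cocycle condition on the triple fibered product.

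Next I would compare the two universal families on the double fibered product. Over $\Qc \fprod{\Nc_v} \Qc$ the sheaves $\nu_1^* E$ and $\nu_2^* E$ parametrize the same stable sheaves pointwise, so they are two universal families for one relative moduli problem. Passing to a smooth chart on which the gerbe is presented by a $\Gm$-bundle and using that stable sheaves are indecomposable, Lemma~\ref{Lem:twisted_descent_datum} applies and shows that the two families differ only by a twist by a line bundle $\Mc$ pulled back from the base, i.e. $\nu_2^* E \cong \nu_1^* E \otimes \Mc$; this lemma is precisely what pins the a priori line-bundle ambiguity of the universal family down to a single controllable twist.

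I would then transport this relation through the determinant of the Fourier--Mukai pushforward. By the projection formula for $p$ and the identity $\det(V \otimes \Mc) \cong \det(V) \otimes \Mc^{\otimes \rk V}$, twisting $E$ by $\Mc$ multiplies $L = \det p_* (E \otimes q^* w)$ by $\Mc^{\otimes \chi(v \otimes w)}$, the exponent being the fiberwise Euler characteristic, which is the rank of the pushforward. Here the orthogonality hypothesis enters decisively: since $\chi(v \otimes w) = 0$, the twist disappears and we obtain a canonical isomorphism $\nu_1^* L \cong \nu_2^* L$, which is the candidate $\iota$.

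The remaining step is to verify the cocycle condition. Since $\iota$ is built from the comparison isomorphism $\nu_2^* E \cong \nu_1^* E \otimes \Mc$ together with the tautological cocycle satisfied by $\Mc$ on the triple product, and since the determinant is functorial, the compatibility of $\iota$ reduces to that of $\Mc$, with all surviving twists annihilated by $\chi(v \otimes w) = 0$. Once the cocycle holds on the nose, Lemma~\ref{Lem:good_descent} yields a unique line bundle on $\Nc_v$ whose pullback is $L$, which is the asserted descent. I expect the main obstacle to lie in the second and fourth steps: rigorously extracting the line-bundle ambiguity of the universal family via Lemma~\ref{Lem:twisted_descent_datum} and then tracking it \emph{coherently} through the determinant, so that the isomorphism $\iota$ satisfies the cocycle identity exactly rather than merely up to a (vanishing) weight.
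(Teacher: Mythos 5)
Your proposal follows essentially the same route as the paper: orthogonality forces $\rk p_*(E \otimes q^* w) = \chi(v\otimes w) = 0$, Lemma~\ref{Lem:twisted_descent_datum} pins the descent ambiguity down to a single weight-one $\Gm$-twist, and taking determinants raises that twist (and the scalar failure of the cocycle condition) to the power $\rk = 0$, killing both. The only cosmetic difference is that you apply the twist analysis to the universal family $E$ and transport it through $p_*$ by the projection formula, whereas the paper applies it directly to the pushforward complex $F$ on a smooth atlas and determines the twist by computing the $\Gm$-weight at a $B\Gm$-point.
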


\begin{proof}
We will proceed as follows: first, we prove that the rank of $p_* (E \otimes q^* w)$ is zero using orthogonality of $v$ and $w$, then we recall that there exists a ``descent datum'' for $E$ which does not satisfy the cocycle condition, and we use it to construct descent datum for $L$, and finally we argue that the descent datum for $L$ satisfies the cocycle condition with the use of the first observation about rank.

\textit{Step 1: rank equals zero.}
Now we want to use orthogonality of $v$ and $w$ to prove that $\rk p_* (E \otimes q^* w) = 0$. For that, let us consider the restriction of this sheaf to a point $\iota: \{ * \} \to \Qc$, so that
\[\rk p_* (E \otimes q^* w) = \rk
\iota^* p_* (E \otimes q^* w) = \chi
\left( \iota^* p_* (E \otimes q^* w) \right).\]
Let $X$ denote the K3 surface that corresponds to the chosen point $\iota$ in $\Qc$, then we have the following pullback diagram:
\begin{diagram}
\Yc & \lTo^q    & \Yc \fprod{\Xc} \Qc
    & \lTo^\kappa   &
    X
    \\
\dTo && \dTo^p && \dTo_{\gamma}
    \\
\Xc & \lTo          & \Qc
    & \lTo^\iota    &
    \{ * \}
    \\
\end{diagram}

Now we can compute the rank. Note that we use base change formula in the first line and orthogonality of $v$ and $w$ in the second line:
\[
\rk p_* (E \otimes q^* w) =
    \chi \left( \iota^* p_* (E \otimes q^* w) \right) =
    \chi \left( \gamma_* \kappa^* (E \otimes q^* w) \right) = \chi \left(
    \kappa^* E \otimes (q\kappa)^* w \right) = 0.
\]

\textit{Step 2: ``descent data'' for $p_* (E \otimes q^* w)$ and $L$.}
Let $a: A \to \Qc$ be a smooth atlas. Then its composition $\nu a$ with $\nu : \Qc \to \Nc_v$ is a smooth atlas for $\Nc_v$, since formal smoothness can be verified by a lifting property and finite presentation is automatic.
Introduce projection morphisms $q_1$, $q_2$, $r_1$, $r_2$, summarized in the diagram below, where
$B = A \fprod{\Qc} A$ and
$C = A \fprod{\Nc_v} A$:
\begin{diagram}
\Qc
    & \lTo^{a}    & A
    & \lTo^{q_i} & B
    \\
\dTo_\nu && \dTo_{=}
    && \dTo_{\pi}
    \\
\Nc_v
    & \lTo^{\nu a}    & A
    & \lTo^{r_i} & C
    & \lTo^{r_{ij}} & C\fprod{A}C
    \\
\end{diagram}
We let $r_{12}$, $r_{23}$, $r_{13}$ be projection and composition morphisms from $C\fprod{A}C$ to $C$ that determine the structure of a groupoid.
Since fibers of $\nu$ are $B\Gm$, we get, by the magic square diagram, that $\pi$ is a $\Gm$-fibration given by some line bundle $T$. We know that the complex $p_* (E \otimes q^* w)$ on $\Qc$ corresponds to a complex $F$ on $A$ that has a gluing isomorphism $q_1^* F \to q_2^* F$ on $B$. Since $q_i = r_i \pi$ and by Lemma \ref{Lem:twisted_descent_datum}, if $F$ was indecomposable, we would get an isomorphism
$\psi: r_1^* F \to r_2^* F \otimes T^{\otimes k}$ for some integer $k$.
The complex $F$ is not necessarily indecomposable, so we wish to apply Lemma \ref{Lem:twisted_descent_datum} to each summand.
However, since $p_* (E \otimes q^* w)$ is a complex of sheaves on a stack with $B \Gm$ stabilizers, we can calculate the weight of the $\Gm$-action on the fibers which would determine the corresponding twist, and we will conclude that the twist is the same for each summand of $F$. Similar to Step 1, let $\iota : B\Gm \to \Qc$ be an embedding of a point with its stabilizer, then we have the following commutative diagram:
\begin{diagram}
\Yc & \lTo^q    & \Yc \fprod{\Xc} \Qc
    & \lTo^\kappa   &
    X \times B\Gm
    \\
\dTo && \dTo^p && \dTo_{\gamma}
    \\
\Xc & \lTo          & \Qc
    & \lTo^\iota    &
    B\Gm
    \\
\end{diagram}
Then consider the restriction along $\iota$, where $E_X$ and $w_X$ denote the restrictions of $E$ and $w$ to $X$:
$$ \iota^* p_* (E \otimes q^* w) =
\gamma_* \left( E_X \otimes w_X
\right) 
.$$
We can see that $\Gm$ acts on $w_X$ trivially and on $E_X$ by tautological scaling, so the resulting action on the cohomology of $E_X \otimes w_X$ is also tautological scaling with weight one. Therefore, we have an isomorphism
$$\psi: r_1^* F \to r_2^* F \otimes T
.$$

Since $q_1^* F \to q_2^* F$ satisfies the cocycle condition, we get that the following composition, denoted by $1\otimes f$, is an isomorphism:
$$  1\otimes f = \left( r_{13}^* \psi \right)^{-1}
    \circ r_{23}^* \psi
    \circ r_{12}^* \psi :
    r_{13}^*r_1^* F \otimes T \to 
    r_{13}^*r_1^* F \otimes T^{\otimes 2}
\mbox{.} $$

\textit{Step 3: cocycle condition for $\varphi \deq \det \psi$.}
Let us first write $\varphi$, remembering from Step 1 that rank is zero:
$$ \varphi : \det r_1^* F \to 
\det r_2^* F \otimes T^{\rk F} =
\det r_2^* F
.$$
So we have a ``descent datum'' for $\det F$, and now we verify that the cocycle condition holds:
\begin{equation*}
\begin{split}
\left( r_{13}^* \varphi \right)^{-1}
    \circ r_{23}^* \varphi
    \circ r_{12}^* \varphi
    =
    \det (1 \otimes f)
    = 1\otimes f^{\rk F} = 1
\mbox{.}
\end{split}
\end{equation*}
So $L$ satisfies the cocycle condition and hence descends to $\Nc_v$.
\end{proof}

Recall that $\Mcal_v \to \Kc$ is the relative moduli scheme of stable sheaves over the stack of quasi-polarized K3 surfaces, while $\Nc_v = \Mcal_v \fprod{\Kc} \Xc \to \Xc$ is the same over pointed quasi-polarized surfaces, so every fiber of $\Nc_v \to \Mcal_v$ is naturally the underlying surface.
Let $L_w$ now denote the line bundle on $\Nc_v$ constructed in Lemma \ref{Lem:theta_on_ptd}.
We now want to argue that $L_w$, possibly up to a twist by the quasi-polarization, is isomorphic to the pullback along $\Nc_v \to \Mcal_v$ of some line bundle on $\Mcal_v$.

\begin{Lem}[Marian-Oprea \cite{MO}]
    Pick two orthogonal K-theory vectors:
    $$v = r_v \Oc + d_v \Hc + a_v \Oc_\sigma ,$$
    $$w = r_w \Oc + d_w \Hc + a_w \Oc_\sigma $$
    in the algebraic K-theory $\K \Yc$, where we recall that $\Yc$ is the universal pointed K3 surface with the universal quasi-polarization $\Hc$, and we use $\sigma$ to denote the class of the natural section $\Xc \to \Yc$. Let $L$ be the line bundle that we descended from $\det$ $q_i^* p_* (E \otimes q^* w)$ on $\Qc$ to $\Nc_v$. Then the restriction of $L$ to a fiber $X$ of $\Nc_v \to \Mcal_v$ is isomorphic to a power of the quasi-polarization $H^n = \Hc^n_{|X}$, and $n$ is independent of the choice of a fiber.
\end{Lem}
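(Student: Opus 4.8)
The plan is to fix a closed point $[F]\in\Mcal_v$ --- a stable sheaf $F$ of class $v$ on a K3 surface $X$ --- and to compute the restriction of $L$ to the fiber of $\Nc_v\to\Mcal_v$ over $[F]$, which is a copy of $X$ (the marked-point factor of $\Nc_v=\Mcal_v\fprod{\Kc}\Xc$). First I would restrict the Cartesian square over $[F]$; since $L$ is the descent of $\det p_*(E\otimes q^*w)$ along $\Qc\to\Nc_v$ (Lemma~\ref{Lem:theta_on_ptd}), its restriction to the fiber can be read off from the universal family $E$ on $\Qc$. The object carried by $\Qc$ depends only on the $\Mcal_v$-coordinate and not on the marked point, so on this fiber $E$ is $F$ pulled back from the surface factor, constant in the marked-point direction. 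Concretely the fiber of $\Yc\fprod{\Xc}\Qc$ is a product $X\times X$ in which $p$ is the projection onto the marked-point copy (integration over the surface on which $F$ lives) and the section $\sigma$ pulls back under $q$ to the diagonal $\Delta\subset X\times X$; thus $q^*\Oc_\sigma$ restricts to $\Oc_\Delta$, while $q^*\Hc$ restricts to $H$ pulled back from the surface factor.

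Next I would use that $\det p_*$ is additive in the K-theory variable, so that $L|_X=\det p_*(E\otimes q^*w)$ factors as a tensor product over the three summands $r_w\Oc$, $d_w\Hc$, $a_w\Oc_\sigma$ of $w$. For the first two summands the integrand is pulled back from the surface factor, so by flat base change for the projection from a product its pushforward is a complex of trivial bundles $\RGamma(X,F)\otimes\Oc_X$, resp.\ $\RGamma(X,F\otimes H)\otimes\Oc_X$, whose determinants are trivial. The decisive summand is the third one: since $\Delta$ maps isomorphically onto the marked-point copy of $X$ under $p$ --- that is, $\Delta$ is a section of $p$ --- one has $p_*(E\otimes\Oc_\Delta)\cong F$ on $X$, and taking determinants yields $(\det F)^{\otimes a_w}$.

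It then remains to identify $\det F$. From $v=r_v\Oc+d_v\Hc+a_v\Oc_\sigma$ the restriction of $v$ to the surface has $\cn_1=d_v H$, so $\cn_1(F)=d_v H$; because $X$ is a K3 surface the first Chern class map $\Pic X\to\NS X$ is injective, and therefore $\det F\cong H^{d_v}$ on the nose. Combining the three factors gives $L|_X\cong H^{a_w d_v}$, so that $n=a_w d_v$. Independence of the fiber is now immediate: the exponent is built only from the fixed numerical data $v$, $w$ and $H$ (through $d_v$ and $a_w$), and not from the particular $F$ or $X$; moreover the orthogonality of $v$ and $w$ is precisely the standing hypothesis that, via Lemma~\ref{Lem:theta_on_ptd}, makes $L$ a genuine line bundle on $\Nc_v$ to begin with.

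I expect the main obstacle to be the bookkeeping of the two copies of $X$ together with the verification that the marked point enters the computation only through the section $\sigma$: once one checks that $q^*\Oc_\sigma=\Oc_\Delta$ and that $\Delta$ is a section of $p$, the third pushforward collapses to $F$ and the answer is forced. A secondary point is the convention for $\Hc$ on $\Yc$; with the natural convention that $\Hc$ restricts to the polarization on each surface fiber the $d_w\Hc$ term is trivial, and under any other reasonable convention it contributes only a further fixed power of $H$ (with exponent a Riemann--Roch--Euler characteristic determined by $v$ and $H$), so the conclusion --- a fiber-independent power of $H$ --- is unaffected.
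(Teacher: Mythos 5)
Your argument is correct, and it is in substance the computation that the paper does not reproduce: the printed ``proof'' of this lemma is only a pointer to the discussion above Equation~(4.1) in Marian--Oprea, so what you have written is a self-contained reconstruction of that argument rather than a different one. The key steps all check out: over the fiber of $\Nc_v \to \Mcal_v$ at $[X,F]$ the relevant family is $X\times X \to X$ with $E$ constant equal to $F$ in the marked-point direction; additivity of $\det p_*$ in the K-theory slot reduces everything to the three summands of $w$; the $\Oc$- and $\Hc$-terms push forward to constant complexes with trivial determinant; and $q^*\Oc_\sigma=\Oc_\Delta$ with $\Delta$ a section of $p$ forces $p_*(E\otimes\Oc_\Delta)\cong F$, giving $L|_X\cong(\det F)^{a_w}\cong H^{a_w d_v}$, visibly independent of the fiber. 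Two fine points are worth making explicit. First, the universal family on $\Yc\fprod{\Xc}\Qc$ restricted over the fiber is only canonical up to a twist by a line bundle pulled back along $p$ (equivalently, carries the weight-one $\Gm$-action on the gerbe $\Qc\to\Nc_v$); this ambiguity changes $\det p_*(E\otimes q^*w)$ by the $\chi(v\otimes w)$-th power of that line bundle, so it is precisely the orthogonality hypothesis --- not just the descent statement of Lemma~\ref{Lem:theta_on_ptd} --- that makes the fiberwise answer well defined. Second, your remark that the $d_w\Hc$-term is convention-dependent but in any case contributes only a fixed power of $H$ is accurate and worth keeping, since the lemma only asserts that $n$ is some fiber-independent integer. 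Modulo these bookkeeping points your proof is complete and, unlike the paper, actually exhibits the exponent $n=a_w d_v$.
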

\begin{proof}
    See the discussion above Equation (4.1) on Page 2080 of the paper ``On Verlinde sheaves and strange duality'' by Marian and Oprea \cite{MO}.
\end{proof}

This lemma shows that $L_w$ and a tensor power of the polarization $\Hc^n$ are fiberwise isomorphic, and therefore
the twist $L_w \otimes \Hc^{-n}$
of the determinant line bundle
$L_w$ on $\Nc_v$
comes as a pullback from $\Mcal_v$.
Let us denote a suitable line bundle on $\Mcal_v$ by $\Theta_w$.

\begin{Def}
    Pick two orthogonal algebraic K-theory classes $v$ and $w$ over
    $\Yc = \Xc \fprod{\Kc} \Xc$.
    There exists a line bundle $\Theta_w$ on $\Mcal_v$ whose pullback to
    $\Nc_v = \Mcal_v \fprod{\Kc} \Xc$ is isomorphic, up to a twist by the universal quasi-polarization, to the determinant line bundle $L_w$. This line bundle $\Theta_w$ is called a \emph{theta line bundle}.
\end{Def}

\subsection{Constructing the Strange Duality morphism}

Recall that our aim is to extend the definition of the Strange Duality morphism to the relative case. Pointwise, the morphism is expected to establish a duality between two vector spaces of global sections. The relative version of cohomology is the derived pushforward functor, therefore we will work with the pushforwards of the theta line bundles.

\subsubsection*{Assumptions}
Recall that $\Kc$ stands for the moduli stack of quasi-polarized K3 surfaces and $\Xc \to \Kc$ denotes the universal K3 surface with quasi-polarization $\Hc$.
Let $v$ and $w$ in $\K \Xc$ be two poinwise orthogonal numerical characteristics, that is $\chi(v\otimes w)=0$ on each K3 surface in the family.
Assume that pointwise on $\Kc$,
semistable sheaves of classes $v$ and $w$ are stable.
By the results of the previous section,
this ensures that we have relative moduli spaces $\pi_v: \Mcal_v \to \Kc$ and $\pi_w: \Mcal_w \to \Kc$ with the theta line bundles $\Theta_w$ on $\Mcal_v$ and $\Theta_v$ on $\Mcal_w$. Let $\pi: \Mcal_v \fprod{\Kc} \Mcal_w \to \Kc$ denote the natural projection.

\begin{Def}
The pushforwards $W \deq \pi_{v*} \Theta_w$ and $V \deq \pi_{w*} \Theta_v$ are known as the \emph{Verlinde complexes}.
\end{Def}

\begin{Def}
Define the \emph{Brill-Noether locus} $\Theta$ of jumping zeroth cohomology on $\Mcal_v \fprod{\Kc} \Mcal_w$ as follows:
$$  \Theta \deq \left\{
    (X,E,F) \mid
    \RGamma^0 (X,E \otimes F) \neq 0
\right\}
\mbox{.} $$
\end{Def}

One naturally expects $\Theta$ to be a divisor or coincide with the whole locus $\Mcal_v(X) \times \Mcal_w(X)$ over each point $[X] \in \Kc$. The locus in $\Kc$ where $\Theta$ is not a divisor is of codimension at least two if the complement is not empty.

\begin{Lem}[\emph{cf.} \protect{\cite[Remark 4.2]{MO}}]
There exists a line bundle $\Tc$ on $\Kc$ so that we have an isomorphism on $\Mcal_v \fprod{\Kc} \Mcal_w$:
$$  \pi^* \Tc \otimes \Oc(\Theta) \cong
\Theta_w \boxtimes \Theta_v
\mbox{.} $$
\end{Lem}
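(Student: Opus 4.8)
The plan is to exhibit $\Oc(\Theta)$ as a determinant line bundle on $\Mcal_v \fprod{\Kc} \Mcal_w$ built from the two universal families, to recognize $\Theta_w$ and $\Theta_v$ as its restrictions to the two factors, and then to split off the discrepancy by a see-saw argument relative to $\Kc$. As in the construction of the theta line bundles, I would work on the stacks carrying universal families and descend the resulting bundle by the Descent Lemma (Lemma \ref{Lem:good_descent}).

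First I would build the master line bundle. Pulling the two universal families back to (the $\Gm$-gerbe over) $\Mcal_v \fprod{\Kc} \Mcal_w \fprod{\Kc} \Xc$, I form the tensor product $E \otimes F$ and push forward along the projection $\varpi$ to $\Mcal_v \fprod{\Kc} \Mcal_w$; set $\Dc \deq \det \varpi_* (E \otimes F)$. Because $v$ and $w$ are pointwise orthogonal, $\chi(E_X \otimes F_X) = 0$ on every fiber, so $\varpi_*(E \otimes F)$ is a perfect complex of rank zero; exactly as in the proof of Lemma \ref{Lem:theta_on_ptd} (where rank zero forces the descent twist $T^{\rk}$ to be trivial), the determinant $\Dc$ then carries no $\Gm$-weight and descends to $\Mcal_v \fprod{\Kc} \Mcal_w$. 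The vanishing of the Euler characteristic also equips $\Dc$ with the canonical theta section, whose zero scheme is precisely the Brill--Noether locus $\Theta$; since $\Dc$ is produced by the very same determinant recipe used to define $\Theta_w$ and $\Theta_v$, this identifies $\Oc(\Theta) \cong \Dc$ in the convention fixed by those definitions.

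Next I would read off the fiberwise restrictions. For a stable sheaf $[F_0] \in \Mcal_w$ of class $w$ lying over $[X] \in \Kc$, the restriction of $\Dc$ to $\Mcal_{v,[X]} \times \{[F_0]\}$ is $\det \varpi_*(E \otimes F_0)$, which depends only on the K-theory class $w$ and therefore coincides with $\Theta_w|_{\Mcal_{v,[X]}}$ (the quasi-polarization twist $\Hc^{n}$ built into $\Theta_w$ is constant along $\Mcal_{v,[X]}$, hence invisible on this fiber); symmetrically, the restriction to $\{[E_0]\} \times \Mcal_{w,[X]}$ is $\Theta_v$. Putting $\Ac \deq (\Theta_w \boxtimes \Theta_v) \otimes \Oc(\Theta)^{-1}$, these identifications make $\Ac$ restrict trivially to every fiber of both projections onto $\Mcal_v$ and onto $\Mcal_w$. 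Applying see-saw relative to $\Kc$ --- first along the proper, geometrically integral fibers of the projection to $\Mcal_w$ to descend $\Ac$ to $\Mcal_w$, then along the fibers of $\Mcal_w \to \Kc$ --- yields $\Ac \cong \pi^* \Tc$ for a line bundle $\Tc$ on $\Kc$, which is exactly the asserted isomorphism.

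The main obstacle is the see-saw step over the stacky base: the relative see-saw theorem needs the moduli spaces $\Mcal_v, \Mcal_w \to \Kc$ to be proper with geometrically connected and reduced fibers, so one must confirm (in the present quasi-polarized, possibly non-ample setting) that the fiberwise moduli of stable sheaves of primitive class on a K3 are projective and irreducible, and then globalize across $\Kc$ rather than over a single field, e.g.\ by checking $q_* \Oc = \Oc$ for the relevant projection $q$. The remaining difficulties are essentially bookkeeping: descending $\Dc$ through Lemma \ref{Lem:good_descent}, matching the determinant sign conventions for $\Oc(\Theta)$, $\Theta_w$ and $\Theta_v$, and absorbing the various quasi-polarization twists into the single base line bundle $\Tc$.
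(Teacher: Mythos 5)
The paper does not actually prove this lemma: it is stated with a bare citation to Marian--Oprea's Remark 4.2, so there is no in-house argument to compare yours against. What you propose is a faithful reconstruction of the standard argument behind that reference, and its skeleton --- define $\Dc \deq \det \varpi_*(E\otimes F)$ on the product of gerbes, use $\rk = 0$ to kill the $\Gm$-weight and descend via Lemma \ref{Lem:good_descent}, identify the restrictions to $\Mcal_{v,[X]}\times\{[F_0]\}$ and $\{[E_0]\}\times\Mcal_{w,[X]}$ with $\Theta_w$ and $\Theta_v$ using that the determinant line bundle depends only on the K-theory class of the second argument, and absorb the discrepancy into a pullback from $\Kc$ by a two-stage see-saw --- is the right one and is consistent with how the paper builds $\Theta_w$ in Lemma \ref{Lem:theta_on_ptd}. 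So I would not call this a different route; it is the route the paper is implicitly outsourcing.

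Two of your steps carry more content than bookkeeping, and it is worth being explicit that neither is settled by the paper either. First, ``the vanishing of the Euler characteristic equips $\Dc$ with the canonical theta section'' is not automatic: to obtain a section cutting out $\Theta$ one must represent $R\varpi_*(E\otimes F)$ by a two-term complex of vector bundles of equal rank, which requires vanishing of $R^2\varpi_*$; on a K3 one has $H^2(X,E\otimes F)^\vee \cong \mathrm{Hom}(E\otimes F,\Oc_X)$ by Serre duality, and this need not vanish for all pairs without further slope hypotheses. The paper itself only ``naturally expects'' $\Theta$ to be a divisor and tolerates a bad locus of codimension at least two in $\Kc$, so your identification $\Oc(\Theta)\cong\Dc$ should be read as the definition of $\Oc(\Theta)$ (away from that locus), and with the paper's convention $L=\det p_*(E\otimes q^*w)$ the canonical section naturally lives in the dual determinant, so the sign conventions on the two sides must be fixed together. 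Second, you correctly isolate the see-saw as the step needing geometric input: it requires $\Mcal_v,\Mcal_w\to\Kc$ to be proper with connected reduced fibers and $\pi_{v*}\Oc=\Oc$, which in the quasi-polarized setting follows from the small-ample-shift reduction (Lemma \ref{schemeness} via Fact \ref{hyparr}) combined with the classical properness and irreducibility results for moduli of sheaves with primitive Mukai vector on a K3. Since you flag both issues rather than assert them, I would count the proposal as essentially correct modulo these explicitly named inputs, rather than as containing a hidden gap.
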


We can pushforward the isomorphism to $\Kc$. After using projection formula twice as well as flat base change isomorphism, we get the following:
$$  \Tc \otimes \pi_* \Oc(\Theta) \cong
\pi_* \left( \Theta_w \boxtimes \Theta_v \right) \cong W \otimes V
\mbox{.} $$

The section of $\Oc(\Theta)$ corresponds to a section $\pi_* \Oc(\Theta)$, so by local triviality of $\Tc$, it corresponds locally to a morphism $W^\vee \to V$. We will denote this morphism by $\SD$ and call it the Strange Duality morphism, remembering that it is only defined up to the twist $\Tc$:
\begin{equation}
\label{Eq:SD_mm}
    \SD : W^\vee \to V
    \mbox{.}
\end{equation}


\ifx\undefined\bysame
\newcommand{\bysame}{\leavevmode\hbox to3em{\hrulefill}\,}
\fi

\end{document}